\setlist[1]{itemsep=5pt}
\newcommand{\comment}[1]{}
      \def\@setcopyright{}
      \def\serieslogo@{}
\newtheorem{theorem}{Theorem}[section]
\newtheorem{lemma}[theorem]{Lemma}
\newtheorem{corollary}[theorem]{Corollary}
\newtheorem{problem}[theorem]{Problem}
\newtheorem{definition}[theorem]{Definition}
\newtheorem{conjecture}[theorem]{Conjecture}
\newtheorem{remark}[theorem]{Remark}
\numberwithin{equation}{section}
\begin{document}
\title{On a holomorphic family of Stein manifolds with strongly pseudoconvex boundaries}
\author{Xiaoshan Li}
\address{School of Mathematics
and Statistics, Wuhan University, Hubei 430072, China}
\thanks{* Corresponding author.}
\thanks{Xiaoshan Li was  supported by  NSFC  No. 11871380, No. 11501422.}
\email{xiaoshanli@whu.edu.cn}
\author[Guicong Su]{Guicong Su*}
\address{School of Mathematics
and Statistics, Wuhan University, Hubei 430072, China}
\thanks{Guicong Su was  supported by  NSFC  No. 11671306.}
\email{suguicong@whu.edu.cn}

\begin{abstract}
We study the stable embedding problem for a CR family of  $3$-dimensional  strongly pseudoconvex CR manifolds with each fiber bounding a stein manifold.
\end{abstract}

\maketitle
\tableofcontents
\section{Introduction}
Let $X$ be  a compact
strongly pseudoconvex CR manifold. The question of whether or not $X$ admits a CR embedding into
a complex Euclidean space has attracted a lot attention.
This amounts to showing that the manifold has a sufficiently
rich collection of global CR functions.
It was shown by Boutet de Monvel \cite{BdM1:74b}
that the answer is affirmative if the dimension  of $X$ is at least five.
In contrast, if $X$ has dimension three,
$X$ may not be even locally embeddable, see \cite{Ku82,Ni74}.
Furthermore, there are examples \cite{Bur:79,G94,Ro65} which show that even
when the CR structure on $X$ is locally embeddable (for example, when it is real analytic),
it can happen that the global CR functions on $X$ fail to separate points of $X$.
It was shown in \cite{BE90} that, in a rather precise sense,
``generic'' perturbations of the standard CR structure on the three
sphere are nonembeddable.

On the other hand, if a compact three dimensional strongly pseudoconvex CR manifold admits a
transversal CR $S^1$-action, it was shown by Lempert \cite{Le92},
Epstein \cite{Ep92} and recently in \cite{HM14, HHL15} by using the Szeg\H{o} kernel,
that such CR manifolds can always be CR embedded into a complex Euclidean space.

In recent years, much progress has been made in understanding the
embedding question from a deformational point of view,
that is, for CR structures which lie in a small neighborhood
of a fixed embedded structure,
see e.\,g.\ \cite{BE90,Ep92,EH00,HLY06,C15,Le92,Le94, M16, W04}.
\begin{problem}[\cite{Le94}]
Suppose $f: (X, HX, J)\rightarrow\mathbb C^k$ is a CR embedding, and let
$(X, HX, J^\prime)$ be another CR manifold with $J^\prime$ close to $J$.
Assuming $(X, HX, J^\prime)$ is CR embeddable into some $\mathbb C^l$,
does it follow that it also admits a CR embedding
$f^\prime: (X, HX, J^\prime)\rightarrow\mathbb C^k$ with $f^\prime$ close to $f$?
\end{problem}

If this holds for $J^\prime$ close to $J$ we say that $f$ is a stable embedding.
We say that two tensors are close if they are close in the
$C^\infty$ topology on the appropriate space.
The problem of stable embedding was first studied by
Tanaka \cite{T75}, who showed that for a smooth family of compact strictly pseudoconvex
CR manifolds  of dimension at least five,
any embedding is stable provided the dimension of first Kohn-Rossi cohomology groups of
the fibers do not depend on the parameter.

However, for three dimensional case Catlin and Lempert \cite{CL92} constructed an example to show
that the unstable embedding exists. They constructed a family of unit circle bundle over a
fixed compact Riemann surface. The instability of CR embedding of the unit circle bundles
is a consequence of the instability of the very ample line bundles.
But if a strictly pseudoconvex CR manifold $X$ admits an embedding in
$\mathbb C^2$, Lempert \cite{Le94} showed that this embedding is stable. In general, Lempert proposed the following
\begin{conjecture}[\cite{Le94}]\label{2018-05-09-c1}
Let $(X, HX, J)$ be a three dimensional strongly pseudoconvex CR manifold.
If $(X, HX, J)$ is the boundary of a stein manifold, then any CR embedding $f: (X, HX, J)\rightarrow\mathbb C^k$ is stable.
\end{conjecture}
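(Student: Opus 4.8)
The plan is to transfer the whole problem onto the Stein fillings: extend the given CR embedding to a holomorphic map on the filling of $(X,HX,J)$, deform that map to the fillings of nearby structures by solving a $\bar\partial$-equation with small right-hand side, and restrict the deformed map back to the boundary. This avoids working with $\bar\partial_b$ on the three-dimensional CR manifolds, where the dimension-three pathologies sit, and puts all of the analysis on strongly pseudoconvex Stein domains, where $\bar\partial$ is well behaved.

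First I would organise the fillings. Let $\Omega$ be a Stein manifold with $\partial\Omega=(X,HX,J)$. For a CR structure $J'$ close to $J$ I want a Stein filling close to $\Omega$, varying smoothly with $J'$; concretely one should check that the boundary deformation taking $J$ to $J'$ is induced by a deformation of $\Omega$ and that the deformed manifold is again Stein. Because the fibre is Stein, the relevant first cohomology of the tangent sheaf vanishes, so this deformation is unobstructed; this is one place where the Stein hypothesis is essential. Packaging these fillings into a holomorphic family $\pi\colon\mathcal{M}\to\Delta$ of Stein manifolds over a disc $\Delta$, realising the CR structures $\{J_\tau\}_{\tau\in\Delta}$ near $J$ (with $J_0=J$) as the boundary CR structures of the fibres $\mathcal{M}_\tau=\pi^{-1}(\tau)$, and fixing a smooth trivialisation $\overline{\mathcal{M}}\cong\overline{\Omega}\times\Delta$ compatible with $\pi$ and with $X\times\Delta$ on the boundary, I pull the fibrewise complex structures back to a smooth family $\{\mathfrak{J}_\tau\}$ of integrable complex structures on $\overline{\Omega}$, with $\mathfrak{J}_0$ the original one, $\mathfrak{J}_\tau\to\mathfrak{J}_0$ in $C^\infty$ as $\tau\to 0$, and each $(\Omega,\mathfrak{J}_\tau)$ Stein and strongly pseudoconvex uniformly for $|\tau|$ small.

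Next, the components $f_1,\dots,f_k$ of $f$ are smooth CR functions on $\partial\Omega$, hence extend to functions $F_j$ holomorphic on $\Omega$ and smooth up to the boundary (solvability of $\bar\partial$ on the Stein manifold $\Omega$ together with Kohn's boundary regularity for the $\bar\partial$-Neumann problem on a strongly pseudoconvex domain). On $(\Omega,\mathfrak{J}_\tau)$ I look for $\mathfrak{J}_\tau$-holomorphic functions $F_{j,\tau}=F_j-u_{j,\tau}$, that is, I must solve $\bar\partial_{\mathfrak{J}_\tau}u_{j,\tau}=g_{j,\tau}$ with $g_{j,\tau}:=\bar\partial_{\mathfrak{J}_\tau}F_j=(\bar\partial_{\mathfrak{J}_\tau}-\bar\partial_{\mathfrak{J}_0})F_j$. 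Since $F_j$ is fixed and $\mathfrak{J}_\tau\to\mathfrak{J}_0$ smoothly, $g_{j,\tau}\to 0$ in $C^\infty(\overline{\Omega})$, and $\bar\partial_{\mathfrak{J}_\tau}g_{j,\tau}=0$ by integrability of $\mathfrak{J}_\tau$; so I take $u_{j,\tau}$ to be the canonical solution furnished by the $\bar\partial$-Neumann operator of $(\Omega,\mathfrak{J}_\tau)$. Because that domain is strongly pseudoconvex with subelliptic $\bar\partial$-Neumann estimates uniform in $\tau$, one gets $u_{j,\tau}\in C^\infty(\overline{\Omega})$ with $u_{j,\tau}\to 0$ in $C^\infty(\overline{\Omega})$, so $F_{j,\tau}$ is $\mathfrak{J}_\tau$-holomorphic, smooth up to the boundary, and $C^\infty$-close to $F_j$. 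Restricting $F_{j,\tau}$ to $X$ gives CR functions for $\partial(\Omega,\mathfrak{J}_\tau)$; transporting back through the trivialisation yields CR functions $f_{j,\tau}$ on $(X,HX,J_\tau)$ with $f_{j,\tau}\to f_j$ in $C^\infty(X)$. Finally, since $X$ is compact and $f$ is an embedding---an injective immersion, which is a $C^1$-open condition for maps out of a compact manifold---the map $f_\tau=(f_{1,\tau},\dots,f_{k,\tau})\colon(X,HX,J_\tau)\to\mathbb{C}^k$ is a CR embedding close to $f$ for all small $\tau$, which is exactly the stability of $f$.

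I expect the argument to hinge on two points, both tied to the Stein hypothesis. The first is the organisation of the fillings: showing that the CR deformation of the boundary is induced by a holomorphic family of Stein fillings depending smoothly on the parameter---equivalently, that nearby CR structures still bound Stein manifolds close to $\Omega$---which is exactly what rules out the Catlin--Lempert instability example \cite{CL92}, where the filling is a disc bundle and is not Stein. The second is uniformity in the parameter: one needs a parametrised form of Kohn's $\bar\partial$-Neumann theory on the moving strongly pseudoconvex domains $(\Omega,\mathfrak{J}_\tau)$, with solvability, $C^\infty$-regularity up to the boundary, and subelliptic estimates whose constants are independent of $\tau$ and depend continuously on $\tau$. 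By contrast the embedding step is soft, and no delicate analysis on the three-dimensional CR manifolds is required---which is the structural reason the Stein hypothesis should be enough.
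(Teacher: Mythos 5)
The statement you are proving is stated in the paper as an \emph{open conjecture} of Lempert; the paper itself does not prove it. What the paper proves (Theorem \ref{main theorem}) is a special case in which the existence of a holomorphic family of fillings is taken as a \emph{hypothesis}: the nearby $X_t$ are assumed to be simultaneously filled by complex manifolds $M_t$ fitting into a holomorphic family $(\overline{\mathcal M_\varepsilon},\Delta_\varepsilon,\pi)$, and only then is the weighted $L^2$ theory used to extend the embedding. Your proposal founders precisely at the step the paper deliberately assumes away. The claim ``because the fibre is Stein, the relevant first cohomology of the tangent sheaf vanishes, so this deformation is unobstructed'' does not hold in the form you need: $H^1(\Omega,T\Omega)=0$ for a Stein $\Omega$ says nothing about whether a given small deformation of the \emph{boundary} CR structure of a two-dimensional Stein filling is induced by a deformation of the filling. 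In real dimension three this is exactly where the pathology lives: by Burns--Epstein, generic small perturbations of an embeddable CR structure are not embeddable at all, hence bound no complex manifold whatsoever; and even restricting to embeddable perturbations (as the definition of stability allows), producing fillings that depend smoothly/holomorphically on the parameter and converge to $\Omega$ is precisely Huang's open problem (Problem \ref{2018-05-09-p1}), not a consequence of Steinness of the central fibre. The Catlin--Lempert instability is not excluded by your scheme by pointing at the non-Stein filling of the central fibre; what must be shown is that for a Stein central fibre the nearby fillings exist, are Stein, and vary in a controlled family --- and no argument for this is given.

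The second half of your proposal --- extend the components $f_j$ holomorphically to the filling, correct them by solving $\overline\partial$ with estimates uniform in the parameter on the moving strongly pseudoconvex Stein fibres, restrict to the boundary, and conclude by openness of embeddings of a compact manifold --- is sound in outline and is close in spirit to how the paper proves Theorem \ref{main theorem}. (The paper works instead on the total space $\mathcal M_\varepsilon$, with a metric and weight $\Lambda_{\tau,k}$ blowing up at $Y_1=\pi^{-1}\{|t|=\varepsilon\}$ so that Kohn's subelliptic theory applies near $Y_0$ only, and it divides by $t$ so that the extension agrees with $F_0$ exactly on $X_0$, then appends the parameter $\pi$ as an extra coordinate; your fibrewise-uniform Neumann argument would need a justification of uniformity, but that is a technical rather than conceptual issue.) None of this, however, repairs the first step: as written, the proposal assumes the simultaneous Stein filling of the deformed structures, which is the actual open content of the conjecture, so the argument is circular at its crucial point.
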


Huang, Luk and Yau \cite{HLY06} studied the stability of embedding  for a CR family of strongly pseudoconvex CR manifolds with the CR structures of the fibers CR depending on the parameters. In \cite{HLY06}, the dimension of each fiber has to be greater or equal to five. For a CR family of three dimensional CR manifolds, the problem of stability of embedding is still open. The CR dependence on the parameters for the CR families is crucial for the studies in the deformation theory of the complex structure of isolated singularities. Here, we refer the readers to \cite{BJ97, H08, HLY06} and the references therein.

In this paper, we will continue the program which was started in \cite{HLY06} on the stability of embedding problems for a CR family of strongly pseudoconvex CR manifolds. First, we recall the notations in \cite[Definition 1.1]{HLY06}.
\begin{definition}
Let $\Delta=\{t\in\mathbb C: |t|<1\}$ be the unit disk in the complex plane $\mathbb C$ and $\{X_t\}_{t\in\Delta}$ be a parameterized family of compact strongly pseudoconvex CR manifolds of real dimension $2n-1$. The family is said to be a CR family, or $X_{t_1}$ is said to be a CR deformation of $X_{t_2}$ for any $t_1, t_2\in\Delta$ if there is a strongly pseudoconvex CR manifold $\mathcal X$ and a $C^\infty$ CR map $\pi: \mathcal X\rightarrow\Delta$ such that (I) $\pi$ is a proper submersion; (II) for any $t\in\Delta$, $X_t=\pi^{-1}(t)$ and $X_t$ is a CR submanifold of $\mathcal X$.
\end{definition}
If $n\geq 3$, that is, the dimension of each fiber  is at least five, Huang-Luk-Yau \cite{HLY06} established the stability of embedding for a CR family of strongly pseudoconvex CR manifolds under the condition that the dimension of the first Kohn-Rossi cohomology of each fiber does not depend on the parameter. One key step in their work is the simultaneous filling of the CR family by a holomorphic family of convex-concave complex manifolds. The argument in \cite{HLY06} does not work on the case when the dimension of each fiber is three. An open problem was stated in \cite{H08}.
\begin{problem}[\cite{H08}]\label{2018-05-09-p1}
Let $\{X_t\}_{t\in\Delta}$ be a CR family of $3$-dimensional strongly pseudoconvex CR manifold. Suppose that the total space admits a normal stein filling. Suppose that $X_0$ is embedded into some $\mathbb C^N$. Under what conditions, can the nearby $X_t$ be CR embedded into the same $\mathbb C^N$?
\end{problem}
In order to study the conjecture \ref{2018-05-09-c1},  we assume that $X_0$ can be filled by a stein manifold $M_0$. Furthermore, we consider a special case of problem \ref{2018-05-09-p1}. We assume that the nearby $X_t$ can be simultaneously filled by a complex manifold $M_t$ with $X_t$ as its strongly pseudoconvex boundary and as a consequence we will have a holomorphic family of complex manifolds with strongly pseudoconvex boundaries. Here, when we say a complex manifold $\overline M$ with smooth boundary $X$ we mean that $\overline M$ has a cover by coordinate patches $\{U_\alpha\}$ with $C^\infty$ coordinates $\varphi_\alpha: U_\alpha\rightarrow\mathbb C^n$ such that $\varphi_\alpha: U_\alpha\cap M\rightarrow\mathbb C^n$ is holomorphic. Then $X$ inherits a natural CR structure from $M$. We may always assume that $M$ is a relatively compact open subset of some $C^\infty$ manifold $M'$ and $X$ is a smooth submanifold of $M'$.

\begin{definition}\label{20171212-Def1}
Let $\Delta=\{t\in\mathbb C: |t|<1\}$ be the unit disk in the complex plane $\mathbb C$ and $\{\overline{M_t}\}_{t\in\Delta}$ be a parameterized family of complex manifolds  with smooth boundaries. The family is said to be a holomorphic family if there is a complex manifold $\overline{\mathcal{M}}$ (with smooth boundary)  and a smooth map $\pi: \overline{\mathcal  M}\rightarrow \Delta$ such that (I) $\pi$ is a  proper submersion; (II) The restriction of $\pi$ on $\mathcal M$ is holomorphic; (III) for any $t\in\Delta$, $\overline{M_t}=\pi^{-1}(t)$ is a complex submanifold of $\overline{\mathcal M}$.
\end{definition}

In what follows, we denote by $(\overline{\mathcal M_{\varepsilon}}, \Delta_\varepsilon, \pi)$ a holomorphic family of complex manifolds with smooth boundaries where $\Delta_{\varepsilon}:=\{t\in\mathbb C: |t|<\varepsilon\}$. Suppose that ${M_0}$ is a stein manifold with  strongly pseudoconvex boundary.

We denote by $X_t=\partial M_t$ the boundary of $M_t$ for any $t\in\Delta_{\varepsilon}$ and set $\mathcal X_{\varepsilon}=\cup_{t\in\Delta_{\varepsilon}}X_t$. The boundary of $\mathcal M_\varepsilon$ has two pieces $Y_0$ and $Y_1$, where $Y_0=\mathcal X_\varepsilon$ and $Y_1=\pi^{-1}\{|t|=\varepsilon\}$. In what follows, we assume that $\mathcal M_\varepsilon$ is contained in a large differential manifold $\mathcal M'$ with $Y_0$ and $Y_1$ smooth submanifolds of $\mathcal M'$. Then we can state our main result.
\begin{theorem}\label{main theorem}
Let $(\overline{\mathcal M_{\varepsilon}}, \Delta_\varepsilon, \pi)$ be a holomorphic family of complex manifolds with each fiber of complex dimension $2$. Assume that $Y_0$ is strongly pseudoconvex with respect to $\mathcal M_\varepsilon$ and $M_0$ is a stein manifold.  Let $\overline{M_t}=\pi^{-1}(t)$ and $X_t=\partial M_t, ~\forall t\in\Delta_\varepsilon$.  If $X_0$ can be CR embedded into $\mathbb C^m$ for some $m$ by a CR map $F_0: X_0\rightarrow\mathbb C^m$, then there is a CR embedding $G: \mathcal X_\varepsilon\rightarrow \mathbb C^{m+1}$ when $\varepsilon$ is sufficiently small  such that $G|_{X_t}$ CR embeds $X_t$ into $\mathbb C^m\times\{t\}$ and $G|_{X_0}=(F_0, 0)$.
\end{theorem}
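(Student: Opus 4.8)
The plan is to realise the embedding in the form $G=(G^1,\dots,G^m,\pi)\colon\mathcal X_\varepsilon\to\Complex^{m+1}$, where $G^1,\dots,G^m$ are to be holomorphic functions on $\mathcal M_\varepsilon$ that are smooth up to the strongly pseudoconvex boundary piece $Y_0=\mathcal X_\varepsilon$ and satisfy $(G^1,\dots,G^m)|_{X_0}=F_0$. Granting such functions, the conclusion is almost formal: on each fiber $\pi\equiv t$, so $G$ maps $X_t$ into $\Complex^m\times\{t\}$; $G^j|_{X_t}$ is CR because $G^j$ is holomorphic on $M_t$; and $G|_{X_0}=(F_0,0)$. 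So everything reduces to (i) extending $F_0$ from $X_0$ to such functions on the total space, and (ii) checking, after shrinking $\varepsilon$, that $G$ is injective and immersive on $\mathcal X_\varepsilon$. This follows the filling-plus-extension strategy of \cite{HLY06}, adapted to the three-dimensional boundary.

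\emph{Step 1 (Steinness) and Step 2 (central fiber).} First I would show that $\mathcal M_\varepsilon$ is Stein once $\varepsilon$ is small. Since $M_0$ is Stein with strongly pseudoconvex boundary it carries a smooth strictly plurisubharmonic defining function $\rho_0$ on a neighbourhood of $\overline{M_0}$ (negative on $M_0$, vanishing on $X_0$); extending $\rho_0$ to $\mathcal M'$ and adding a large multiple of $|\pi|^2$ gives a function strictly plurisubharmonic near $\overline{M_0}$, hence on $\overline{\mathcal M_\varepsilon}=\pi^{-1}(\overline{\Delta_\varepsilon})$ for $\varepsilon$ small, and gluing it (by a regularised maximum, away from $Y_0$) to the given strictly plurisubharmonic defining function of $Y_0$ near $Y_0$ yields a smooth strictly plurisubharmonic $r$ on $\overline{\mathcal M_\varepsilon}$ with $r<0$ on $\mathcal M_\varepsilon$ and $r\to0$ along $Y_0$. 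Then $\varphi:=-\log(-r)-\log(\varepsilon^2-|\pi|^2)$ is a strictly plurisubharmonic exhaustion of $\mathcal M_\varepsilon$: the first term is strictly plurisubharmonic (as $s\mapsto-\log(-s)$ is convex increasing) and blows up at $Y_0$, the second is plurisubharmonic (as $|\pi|^2$ is and $s\mapsto-\log s$ is convex decreasing) and blows up at $Y_1$. Hence $\mathcal M_\varepsilon$ is Stein; in particular each fiber $M_t$, being a closed complex submanifold, is Stein, and each $X_t$ is strongly pseudoconvex. Second, since $X_0=\partial M_0$ bounds the Stein manifold $M_0$, the classical CR extension theorem (Kohn–Rossi / Harvey–Lawson; one enlarges $M_0$ slightly to a strongly pseudoconvex Stein domain, solves $\bar\partial$ there, and uses the Hartogs phenomenon on the shell to make the correction vanish on $X_0$) produces $\tilde F_0=(\tilde F_0^1,\dots,\tilde F_0^m)$ holomorphic on $M_0$, smooth on $\overline{M_0}$, with $\tilde F_0|_{X_0}=F_0$.

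\emph{Step 3 (extension to the family) — the heart.} For each $j$, patching the trivial holomorphic extensions of $\tilde F_0^j$ coming from local holomorphic coordinates $(\pi,z_1,z_2)$ near $M_0=\{\pi=0\}$ produces a smooth $H^j$ on $\overline{\mathcal M_\varepsilon}$ with $H^j|_{M_0}=\tilde F_0^j$ and $\bar\partial H^j=O(|\pi|)$ near $M_0$; thus $g^j:=\bar\partial H^j$ is a $\bar\partial$-closed $(0,1)$-form, smooth up to $Y_0$, with $|g^j|^2/|\pi|^2$ locally bounded near $M_0$. On the Stein manifold $\mathcal M_\varepsilon$ I would solve $\bar\partial v^j=g^j$ by Hörmander's $L^2$-estimates with a complete Kähler metric and the plurisubharmonic weight $\varphi+\log|\pi|^2$ (legitimate since $i\partial\bar\partial\log|\pi|^2=0$ off $M_0$); the datum has finite weighted norm, and the resulting solution satisfies $\int_{\mathcal M_\varepsilon}|v^j|^2e^{-\varphi}|\pi|^{-2}<\infty$, which forces $v^j|_{M_0}=0$. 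Interior elliptic regularity together with Kohn's subelliptic estimates at the strongly pseudoconvex boundary $Y_0$ — a purely local matter near $Y_0$, where the weight degenerates only to first order — give that $v^j$ is smooth up to $Y_0$. Then $G^j:=H^j-v^j$ is holomorphic on $\mathcal M_\varepsilon$, smooth up to $Y_0$, with $G^j|_{M_0}=\tilde F_0^j$, hence $G^j|_{X_0}=F_0^j$.

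\emph{Step 4 (conclusion) and the main obstacle.} Using an Ehresmann trivialisation $\overline{\mathcal M_\varepsilon}\cong\overline{M_0}\times\overline{\Delta_\varepsilon}$, the maps $G|_{X_t}\colon X_t\cong X_0\to\Complex^m$ form a $C^\infty$ family with $G|_{X_0}=F_0$, a smooth embedding of the compact manifold $X_0$; since being an immersion and being injective are open under $C^\infty$-small perturbations of a map of a fixed compact manifold, there is $\varepsilon'\in(0,\varepsilon)$ with $G|_{X_t}$ a smooth embedding for all $|t|<\varepsilon'$. Then $G=(G^1,\dots,G^m,\pi)$ is injective on $\mathcal X_{\varepsilon'}$ (points in distinct fibers have distinct last coordinate, points in the same $X_t$ are separated by $G|_{X_t}$) and an immersion (on $T_pX_t=\ker d\pi$ the differential of $(G^1,\dots,G^m)$ is injective, while $d\pi$ is transversally an isomorphism), hence a CR embedding of $\mathcal X_{\varepsilon'}$ carrying $X_t$ into $\Complex^m\times\{t\}$ and restricting to $(F_0,0)$ on $X_0$; relabelling $\varepsilon'$ as $\varepsilon$ finishes the argument. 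The hard part is Step 3: $\mathcal M_\varepsilon$ is Stein but \emph{not} strongly pseudoconvex, since its second boundary piece $Y_1=\pi^{-1}\{|t|=\varepsilon\}$ is Levi-flat, so Kohn's global boundary regularity for $\bar\partial$ on strongly pseudoconvex domains is unavailable; one must combine global $L^2$-solvability (from Steinness) with an Ohsawa–Takegoshi-type weight $\log|\pi|^2$ to pin down the restriction to the central fiber and with purely local subelliptic regularity at $Y_0$, and arranging that a single solution both vanishes on $M_0$ and is smooth up to $Y_0$ is the delicate point where most of the technical work lies.
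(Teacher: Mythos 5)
Your overall architecture coincides with the paper's: extend $F_0$ holomorphically into $M_0$, extend further to the total space by correcting an approximate extension with a $\overline\partial$-solution that is controlled on the central fiber, and conclude by openness of embeddings of the compact manifold $X_0$ under small $C^1$-perturbations. The genuine gap is in your Step 3, precisely at the point you yourself flag as delicate: the claim that the Hörmander solution $v^j$ is smooth up to $Y_0$ because Kohn's subelliptic estimates are ``a purely local matter near $Y_0$.'' Subelliptic estimates yield boundary smoothness for the \emph{canonical} solution of a $\overline\partial$-Neumann (or weighted canonical) problem that has actually been set up on the domain; they say nothing about an arbitrary solution produced by interior $L^2$-theory with a complete metric, since solutions differ by holomorphic functions that can be wild at the boundary. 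Setting up that Neumann-type problem on $\mathcal M_\varepsilon$ is nontrivial exactly because the boundary is not smooth strongly pseudoconvex: it has the Levi-flat piece $Y_1=\pi^{-1}\{|t|=\varepsilon\}$ and the corner where $Y_0$ meets $Y_1$. The paper's entire Section 2 exists to handle this: a metric blowing up at $Y_1$, the weight $\Lambda_{\tau,k}=-\tau\log(\varepsilon^2-|t|^2)+k\varphi$, a density lemma for forms with compact support in the $t$-direction, a Morrey--Kohn--H\"ormander basic estimate whose only boundary term is the (positive) Levi form integral over $Y_0$, and a re-run of the Folland--Kohn elliptic regularization argument in this weighted, effectively non-compact setting to get smoothness of the canonical solution on $\overline{\mathcal M_\varepsilon}\setminus Y_1$. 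None of this is absorbed by ``local subellipticity.''

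Moreover, your specific mechanism for pinning the central fiber makes this regularity step strictly harder than in the paper: the Ohsawa--Takegoshi-type weight $\log|\pi|^2$ is singular along $M_0$, and $M_0$ meets $Y_0$ exactly at $X_0$ — the boundary points where you most need smoothness of $G^j$ (to get $G^j|_{X_0}=F_0^j$ and the $C^1$-convergence used in Step 4). The weighted subelliptic theory you want to invoke is for weights smooth up to the boundary, so your solution operator falls outside it precisely where it matters. The paper avoids the singular weight altogether: it divides the error form by $t$, i.e.\ solves $\overline\partial u=\frac1t\overline\partial(\chi(t)\tilde f)$ with the smooth weight $\Lambda_{\tau,k}$, and sets $\hat f=\chi(t)\tilde f-tu$, so vanishing of the correction on $M_0$ is automatic and the datum and weight stay smooth up to $Y_0$. (A smaller soft spot: your construction of $H^j$ with $\overline\partial H^j=O(|\pi|)$ \emph{up to} $X_0$ needs more than ``local holomorphic coordinates,'' since near $X_0$ the trivial $t$-independent extension is not defined on the nearby fibers whose boundaries move; the paper gets genuine local holomorphic extensions on boundary-touching strongly pseudoconvex patches via Bell's lemma and Amar's extension theorem before patching.) To repair your argument you would essentially have to import the paper's division-by-$t$ device and then still prove the boundary-regularity theorem for the canonical solution on $\mathcal M_\varepsilon$ — which is the bulk of the paper, not a citation.
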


\section{$L^2$-method for the $\overline\partial$-equation on $\mathcal M_\varepsilon$}\label{dbar equation}
We now proceed to study the stability problem for a CR family of strongly pseudoconvex CR manifolds which bound a holomorphic family of complex manifolds. For this, we need study the $\overline\partial$-equation on $\mathcal M_\varepsilon$. However, the non-smooth boundary of $\mathcal M_\varepsilon$ makes a direct approach difficult. Thus, we need find a Hermitian metric on $\mathcal M_\varepsilon$ which can blow up $Y_0$ to infinity.

We now introduce a Hermitian metric $ds^2$ over $\mathcal M_\varepsilon$ such that the following properties hold: (a) First, $ds^2$ is smooth up to $\overline {\mathcal M_\varepsilon}\setminus Y_1$, (b) we can find a finite covering $\{U_\alpha\}_\alpha$ of $\overline{\mathcal M_\varepsilon}$ and coordinates $(z_\alpha, t)$ on each $U_\alpha$ such that $z_\alpha$ is smooth on $U_\alpha$ and $z_\alpha|_{\mathcal M_\varepsilon\cap U_\alpha}$ is holomorphic. For convenience, we will omit $\alpha$ in the notation $z_\alpha$. With respect to the coordinates $(z, t)$ with $z=(z_1, z_2)$,
$$ds^2=\sum_{j, k=0}^2 h_{j\overline k}(z,t)dz_j\otimes d\overline z_k+\frac{1}{(\varepsilon^2-|t|^2)^2}dt\otimes d\overline t,$$
where $z_0=t$ and $h_{j\overline k}\in C^\infty(U_\alpha\cap\overline{\mathcal M_\varepsilon})$ when $U_\alpha$ intersects with the boundary of $\mathcal M_\varepsilon$.  Write $\eta(t)=-\log{(\varepsilon^2-|t|^2)}$, $e^{2\eta(t)}=\frac{1}{(\varepsilon^2-|t|^2)^2}$. Then the volume form on $\mathcal M_\varepsilon$ with respect to $ds^2$ is given by $dv=h_0(z, t)e^{2\eta(t)}d_{Eucl}$, where $ h_0(z, t)\in C^\infty(U_\alpha\cap\overline {\mathcal M_\varepsilon})$ and $d_{Eucl}$ is the standard volume form on $\mathbb C^{3}$.

\begin{lemma}\label{2018-05-09-l1}
For sufficiently small $\varepsilon$, there exists a strictly plurisubharmonic function $\varphi$ on $\mathcal M_\varepsilon$ which is smooth up to the boundary $\partial \mathcal M_\varepsilon$. As a consequence, for  $t\in\Delta_\varepsilon$ each $M_t$ is a stein manifold with  strongly pseudoconvex boundary.
\end{lemma}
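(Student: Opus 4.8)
The plan is to build the global strictly plurisubharmonic function $\varphi$ on $\mathcal M_\varepsilon$ by combining two ingredients: a plurisubharmonic exhaustion coming from the Stein fiber $M_0$, suitably extended in the parameter direction, and the large negative term $\eta(t)=-\log(\varepsilon^2-|t|^2)$ that already appears in the metric $ds^2$ and is plurisubharmonic in $t$ (indeed strictly, since $\partial_t\partial_{\bar t}\eta>0$). First I would use that $M_0$ is Stein with strongly pseudoconvex boundary: there is a strictly plurisubharmonic defining function $\rho_0$ for $M_0$, smooth up to $X_0=\partial M_0$, which is actually strictly plurisubharmonic on a neighborhood of $\overline{M_0}$ inside $\mathcal M'$. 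Because $\pi$ is a proper submersion and $\pi|_{\mathcal M_\varepsilon}$ is holomorphic, after shrinking $\varepsilon$ we can extend $\rho_0$ to a smooth function $\rho$ on a neighborhood of $\overline{M_0}$ in $\overline{\mathcal M_\varepsilon}$ whose restriction $\rho_t:=\rho|_{\overline{M_t}}$ varies smoothly with $t$; by continuity of the complex Hessian, $\rho$ is strictly plurisubharmonic on $\overline{\mathcal M_\varepsilon}$ once $\varepsilon$ is small, at least on the piece of $\overline{\mathcal M_\varepsilon}$ that stays close to $\overline{M_0}$ — and since $\pi$ is proper and the fibers are diffeomorphic, for $\varepsilon$ small \emph{all} of $\overline{\mathcal M_\varepsilon}\setminus Y_1$ is close to $\overline{M_0}$ in $\mathcal M'$, so this is the whole region we care about.

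Next I would form $\varphi = \rho + A\,\eta(t)$, or more carefully $\varphi=\rho + A(\varepsilon^2-|t|^2)^{-1}$ or $\varphi = \rho + \chi(\eta(t))$ for a suitable convex increasing $\chi$, and check strict plurisubharmonicity of the full Hessian on $\mathcal M_\varepsilon$. The mixed second derivatives $\partial_{z_j}\partial_{\bar t}\rho$ are the only potential obstruction to positivity of the bordered Hessian; but because $\eta$ contributes a term of size $\sim e^{2\eta}=(\varepsilon^2-|t|^2)^{-2}$ to the $t\bar t$ entry while $\rho$ and its first derivatives are uniformly bounded on $\overline{\mathcal M_\varepsilon}\setminus Y_1$ (they extend smoothly there), a standard Schur-complement / diagonal-dominance estimate shows that for $A$ fixed large enough and $\varepsilon$ small the Levi form of $\varphi$ dominates a positive multiple of $ds^2$. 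This simultaneously gives that $\varphi$ is strictly plurisubharmonic on $\mathcal M_\varepsilon$ and smooth up to $\partial\mathcal M_\varepsilon$ (it blows up only along $Y_1$, which is excluded from $\overline{\mathcal M_\varepsilon}\setminus Y_1$; near $Y_0$ everything is smooth by property (a) of $ds^2$ and the smoothness of $\rho$ up to $Y_0$).

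For the consequence, restricting $\varphi$ to a fixed fiber $\overline{M_t}$ kills the $\eta(t)$ term (it becomes a constant on $M_t$), so $\varphi|_{M_t}=\rho_t+\text{const}$ is strictly plurisubharmonic on $M_t$ and smooth up to $X_t$. Since $\rho_t$ is, by construction, a strictly plurisubharmonic defining function for $M_t$ (its differential is nonzero along $X_t=\partial M_t$ because this is an open condition inherited from $\rho_0$), $X_t$ is strongly pseudoconvex; and a relatively compact domain in a complex manifold carrying a smooth strictly plurisubharmonic exhaustion — here $-\log(-\rho_t)$ after the usual modification near the boundary, or simply invoking that a strongly pseudoconvex compact-boundary domain with a global strictly psh function is holomorphically convex hence Stein by Grauert — is Stein.

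The step I expect to be the main obstacle is the first one: producing a single function $\rho$ on $\overline{\mathcal M_\varepsilon}$ that is \emph{strictly} plurisubharmonic on the total space, not merely fiberwise. One must be careful that the mixed $z$–$t$ derivatives of the extension do not destroy positivity; the resolution is exactly that the $\eta$-term in the metric was designed to blow up, giving enough room in the $t\bar t$ direction to absorb them, but making this quantitative (choosing $A$ and then $\varepsilon$ in the right order, with constants uniform over the covering $\{U_\alpha\}$) is where the real work lies.
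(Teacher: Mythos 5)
Your overall strategy is the same as the paper's: pull a strictly plurisubharmonic function off the Stein fiber $M_0$, transport it over the family by the (non-holomorphic) trivialization $\mathcal M_\varepsilon\cong M_0\times\Delta_\varepsilon$, recover fiberwise strict plurisubharmonicity for small $\varepsilon$ by continuity, add a term depending only on $t$ to make the full Hessian positive, and conclude Steinness of the fibers via Grauert. But there are two genuine problems. First, your choice of $t$-term, $A\eta(t)=-A\log(\varepsilon^2-|t|^2)$ (or $A(\varepsilon^2-|t|^2)^{-1}$), blows up along $Y_1=\pi^{-1}\{|t|=\varepsilon\}$, so the resulting $\varphi$ is \emph{not} smooth up to the boundary $\partial\mathcal M_\varepsilon$, which consists of both $Y_0$ and $Y_1$; the lemma asserts smoothness up to the whole boundary, and this boundedness is what makes $\varphi$ usable later as the summand $k\varphi$ in the weight $\Lambda_{\tau,k}$ (the blow-up at $Y_1$ is supplied there by the separate term $-\tau\log(\varepsilon^2-|t|^2)$). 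The fix also shows your Schur-complement/blow-up discussion is unnecessary: since the transported function and all its derivatives are bounded on the compact set $\overline{\mathcal M_\varepsilon}$ and its fiberwise complex Hessian is bounded below by a positive constant for small $\varepsilon$, the smooth bounded term $l|t\circ\pi|^2$ with $l$ a large constant already absorbs the mixed $z$--$t$ second derivatives; this is exactly the paper's choice. Relatedly, your intermediate claim that the transported $\rho$ alone is strictly plurisubharmonic on the total space for small $\varepsilon$ ``by continuity'' is false: at $t=0$ the function $\rho_0\circ p_r$ has identically vanishing Hessian in the $t$-direction, so continuity gives only semi-positivity of the full Hessian nearby; the corrector term is not optional.

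Second, you assert without justification the key input: a strictly plurisubharmonic function on $M_0$ that is smooth up to $X_0$ (your ``strictly psh defining function $\rho_0$, strictly psh on a neighborhood of $\overline{M_0}$''). Steinness of the open manifold $M_0$ gives a strictly psh exhaustion, but that exhaustion need not extend smoothly to the boundary, and strong pseudoconvexity of $X_0$ only gives a strictly psh function near $X_0$. The paper supplies this step by citing Hill--Nacinovich: $M_0$ embeds as an open subset of a larger Stein manifold $Y$, which in turn embeds holomorphically into some $\mathbb C^N$ with the embedding smooth on a neighborhood of $\overline{M_0}$, and one takes $\varphi_0=F^\ast\bigl(\sum_j|z_j|^2\bigr)$. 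Some such argument (or a patching of the interior exhaustion with a strictly psh local defining function near the boundary via a regularized maximum) must be given. Note also that ``strictly plurisubharmonic on a neighborhood of $\overline{M_0}$ inside $\mathcal M'$'' is not meaningful as written, since $\mathcal M'$ carries no complex structure. With these two repairs your argument coincides with the paper's proof.
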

\begin{proof}
By the assumption of theorem \ref{main theorem}, $M_0$ is a stein manifold with a strongly pseudoconvex boundary. By theorem 4.1 in \cite{HN05}, there is a large stein manifold $Y$ which contain $M_0$ as an open subset. Then $M_0$ can be embedded to some $\mathbb C^N$ by a holomorphic map $F$ which is smooth up to the boundary. Then $\varphi_0=F^\ast(\sum_{j=1}^N |z_j|^2)$ is a strictly plurisubharmonic function on $M_0$ and $\varphi_0$ is smooth up to the boundary of $M_0$. Let $f: M_0\times \Delta_{\varepsilon}\rightarrow\mathcal M_\varepsilon$ be a diffeomorphism which is smooth up to the boundary satisfying $f|_{M_0\times\{0\}}=id.$ Let $p_r: M_0\times\Delta_\varepsilon\rightarrow M_0$ be the natural projection. Take  $\varphi=\varphi_0\circ p_r\circ f^{-1}+l|t\circ \pi|^2$ and $l$ is a positive number. Then $\varphi$ will be a strictly plurisubharmonic function on $\mathcal M_\varepsilon$ which is smooth up to the boundary  of $\mathcal M_\varepsilon$ when $l$ is sufficiently large and $\varepsilon$ is sufficiently small. Thus, each $M_t$ is a complex manifold without compact positive dimensional subvariety. Since each $M_t$ has a strongly pseudoconvex boundary, then by a result of Grauert \cite{G62} (also see \cite[Corollary, page 233]{AS70}) we have that each $M_t$ is a stein manifold.
\end{proof}
Lemma \ref{2018-05-09-l1} implies that the holomorphic family of complex manifolds in theorem
\ref{main theorem} is actually a holomorphic family of stein manifolds.

In what follows, we will fix a smooth real-valued function $r$ over $\overline{\mathcal M_\varepsilon}$ such that $r$ is a defining function of $Y_0$. For $\tau\gg1$, write $$\Lambda_{\tau, k}=-\tau\log{(\varepsilon^2-|t|^2)}+k\varphi$$ which is a strongly plurisubharmonic function on $\mathcal M_\varepsilon$ when $\tau$ is sufficient large. We will use $\Lambda_{\tau, k}$ as a weight function to solve the $\overline\partial$-equation on $\mathcal M_\varepsilon.$

We will work with the following orthonormal basis over $U_\alpha$:
\begin{equation}\label{basis}
\omega^0=\frac{b_0}{\varepsilon^2-|t|^2}dt, \omega^j=\sum_{k=1}^2 b^j_kdz_k+b_jdt, j=1, 2,
\end{equation}
where $b_0, b_j, b_k^j\in C^\infty(U_\alpha\cap\overline {\mathcal M_\varepsilon})$ and $b_0>0$. Let $\{L_j\}_{j=0}^2$ be the frame over $U_\alpha$ dual to $\{\omega^j\}_{j=0}^2$ in (\ref{basis}). Then
\begin{equation}
L_0=(\varepsilon^2-|t|^2)(a_0\frac{\partial}{\partial t}+\sum_{k=1}^2 a_k\frac{\partial}{\partial z_k}), L_j=\sum_{k=1}^2 a_j^k\frac{\partial}{\partial z_k}, j\neq 0.
\end{equation}
Here, $a_0, a_k, a_j^k\in C^\infty(U_\alpha\cap\overline{\mathcal M_\varepsilon})$.

Let $\Omega^{0, q}(\overline{\mathcal M_\varepsilon})$ be the space of $(0, q)$-forms on $\mathcal M_\varepsilon$ which are smooth up to the boundary of $\overline{\mathcal M_\varepsilon}$. Let $\Omega^{0, q}_c(\mathcal M_\varepsilon)$ be the subspace of $\Omega^{0, q}(\overline{\mathcal M_\varepsilon})$ with the elements having compact support in the interior of $\mathcal M_\varepsilon$. A smooth $(0, q)$-form $u\in\Omega^{0, q}(\overline{\mathcal M_\varepsilon})$ is said to have compact support along $t$-direction if it vanishes when $\varepsilon-|t|<c_u$ with $c_u$ a sufficiently small constant. We denote by $L^2_{(0, q)}(\mathcal M_\varepsilon, \Lambda_{\tau, k})$ the completion of $\Omega^{0, q}_c(\mathcal M_\varepsilon)$ under the $\Lambda_{\tau, k}$-weighted $L^2$-norm.

Consider the $\overline\partial$-operator as a maximally closed extended operator acting on a dense subspace of the $\Lambda_{\tau, k}$-weighted $L^2$-space of functions, $(0, 1)$-forms. Let $\overline\partial_{\Lambda_{\tau, k}}^\ast$ be its Hilbert adjoint from the space of $(0, 1)$-forms into the space of functions.
\begin{lemma}\label{density lemma}
For all $u\in{\rm Dom}(\overline\partial)\cap{\rm Dom}(\overline\partial_{\Lambda_{\tau, k}}^\ast)\cap L^2_{(0, 1)}(\mathcal M_\varepsilon, \Lambda_{\tau, k})$, there exists a sequence $\{u_m\}$ which belong to $\Omega^{0, 1}(\overline{\mathcal M_\varepsilon})$ and have compact supports along $t$-direction such that as $m\rightarrow\infty$,
\begin{equation}
\|u_m-u\|_{\Lambda_{\tau, k}}+\|\overline\partial u_m-\overline\partial u\|_{\Lambda_{\tau, k}}+\|\overline\partial_{\Lambda_{\tau, k}}^\ast u_m-\overline\partial^{\ast}_{\Lambda_{\tau, k}}u\|_{\Lambda_{\tau, k}}\rightarrow 0.
\end{equation}
\end{lemma}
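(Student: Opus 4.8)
The plan is to follow the classical two-step Friedrichs-type density argument familiar from the $L^2$-theory for $\overline\partial$ on manifolds with boundary (Hörmander, and in the non-compact/weighted setting Demailly), but carried out so that the approximating forms are in addition \emph{cut off in the $t$-direction}. The essential point is that the weight $\Lambda_{\tau,k}$ already blows up to $+\infty$ as $|t|\uparrow\varepsilon$ (through the term $-\tau\log(\varepsilon^2-|t|^2)$), and likewise the Hermitian metric $ds^2$ has the factor $(\varepsilon^2-|t|^2)^{-2}$ in the $dt\,d\overline t$-slot, so that cutting off near $\{|t|=\varepsilon\}$ is a ``free'' operation: the region being discarded carries exponentially small weighted mass, and the relevant component $L_0$ of $\overline\partial$ (respectively the dual piece of $\overline\partial^*_{\Lambda_{\tau,k}}$) is exactly the one that is tamed by the blow-up. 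So I would first handle the $t$-cutoff, and then, on the resulting forms which have compact support along $t$ but may still touch the strongly pseudoconvex boundary $Y_0$, run the standard boundary regularization.

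\emph{Step 1: cutting off in $t$.} Fix $u\in \mathrm{Dom}(\overline\partial)\cap\mathrm{Dom}(\overline\partial^*_{\Lambda_{\tau,k}})$. Choose $\chi\in C^\infty_c([0,1))$ with $\chi\equiv1$ near $0$, and for $\delta>0$ set $\chi_\delta(t)=\chi\!\left(\frac{\varepsilon-|t|}{\delta}\right)$, so $\chi_\delta$ is supported where $\varepsilon-|t|\le \delta$ is \emph{false}, i.e.\ $\chi_\delta=1$ on the bulk and $\chi_\delta=0$ in a collar of $Y_1$. Put $u^{(\delta)}=\chi_\delta u$. Then $\chi_\delta u\to u$ in $L^2_{(0,1)}(\mathcal M_\varepsilon,\Lambda_{\tau,k})$ by dominated convergence. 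For the derivatives one writes
\[
\overline\partial(\chi_\delta u)=\chi_\delta\,\overline\partial u+\overline\partial\chi_\delta\wedge u,
\qquad
\overline\partial^*_{\Lambda_{\tau,k}}(\chi_\delta u)=\chi_\delta\,\overline\partial^*_{\Lambda_{\tau,k}}u-(\overline\partial\chi_\delta)\,\lrcorner\, u,
\]
(the second identity being exactly why $u^{(\delta)}$ stays in $\mathrm{Dom}(\overline\partial^*_{\Lambda_{\tau,k}})$, since $\chi_\delta$ is real and multiplication by a bounded smooth function preserves both domains). The two ``main'' terms converge to $\overline\partial u$, $\overline\partial^*_{\Lambda_{\tau,k}}u$ again by dominated convergence; the commutator terms $\overline\partial\chi_\delta\wedge u$ and $(\overline\partial\chi_\delta)\lrcorner\, u$ must be shown to go to $0$ in the weighted norm. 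Here is where the geometry does the work: $\overline\partial\chi_\delta$ is a multiple of $d\overline t$, whose pointwise norm with respect to $ds^2$ is $\|d\overline t\|_{ds^2}=\varepsilon^2-|t|^2=O(\delta)$ on the support of $\chi_\delta'$; combined with $|\chi_\delta'|=O(\delta^{-1})$ this gives a pointwise bound $|\overline\partial\chi_\delta|_{ds^2}=O(1)$, and then on the collar $\{\varepsilon-|t|\le\delta\}$ we have $\int_{\{\varepsilon-|t|\le\delta\}}|u|^2 e^{-\Lambda_{\tau,k}}\,dv\to 0$ as $\delta\to0$ because $u\in L^2$. Hence $\|\overline\partial\chi_\delta\wedge u\|_{\Lambda_{\tau,k}}\to0$, and similarly for the interior product. (If one wants a genuinely uniform commutator bound one can instead normalise $\chi_\delta$ against the distance function $\eta(t)=-\log(\varepsilon^2-|t|^2)$ rather than $\varepsilon-|t|$, i.e.\ cut off in the complete variable $\eta$, which makes $|\overline\partial\chi_\delta|$ bounded without any case analysis; either version works.) This proves the claim with $\Omega^{0,1}(\overline{\mathcal M_\varepsilon})$-forms having compact support along $t$, \emph{provided} $u$ was already smooth up to $Y_0$.

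\emph{Step 2: regularizing up to the strongly pseudoconvex boundary $Y_0$.} It remains to approximate a general $u^{(\delta)}$ — which has compact support along $t$ but is only $L^2$ near $Y_0$ — by forms in $\Omega^{0,1}(\overline{\mathcal M_\varepsilon})$ with the same $t$-support. Since $u^{(\delta)}$ is supported in $\{\varepsilon-|t|\ge c\}$, a compact subset of $\overline{\mathcal M_\varepsilon}\setminus Y_1$, and the metric and weight are smooth there up to $Y_0$ (property (a) of $ds^2$, and smoothness of $\varphi$ up to $\partial\mathcal M_\varepsilon$ by Lemma \ref{2018-05-09-l1}), this is precisely the setting of the classical Hörmander–Kohn–Nirenberg density lemma: one covers a neighbourhood of $Y_0$ by the boundary coordinate patches $U_\alpha$ with the defining function $r$, uses a partition of unity to localise, pushes $u^{(\delta)}$ slightly \emph{inward} along the $-\,\mathrm{grad}\,r$ direction to get a form defined on a neighbourhood of the closure, and then mollifies with a standard Friedrichs approximation. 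The inward translation must be done so as to preserve membership in $\mathrm{Dom}(\overline\partial^*_{\Lambda_{\tau,k}})$, i.e.\ so that the resulting forms satisfy the $\overline\partial^*$-boundary condition along $Y_0$; this is the standard point, handled exactly as in the compact strictly pseudoconvex case, and the convergence of $\overline\partial$ and $\overline\partial^*_{\Lambda_{\tau,k}}$ follows from the Friedrichs commutator lemma since the coefficients of both first-order operators are smooth up to $Y_0$ on the relevant compact region. Throughout Step 2 nothing is done near $Y_1$, so the $t$-support is untouched. Composing Step 1 and Step 2 and extracting a diagonal sequence $u_m$ gives the lemma.

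\emph{Main obstacle.} The only delicate point is the commutator estimate in Step 1 — making sure that the error terms $\overline\partial\chi_\delta\wedge u$ and $(\overline\partial\chi_\delta)\lrcorner\, u$ vanish in the limit despite $\chi_\delta'$ blowing up; this is handled, as indicated, by exploiting that $\|d\overline t\|_{ds^2}$ is comparable to $\varepsilon^2-|t|^2$, equivalently that the $t$-direction is \emph{complete} for $ds^2$ near $Y_1$, so that cutting off in the induced distance costs nothing. Step 2 is routine once Step 1 has reduced matters to a compact piece of $\overline{\mathcal M_\varepsilon}$ meeting only the smooth strongly pseudoconvex boundary $Y_0$.
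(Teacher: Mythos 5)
Your proposal is correct and follows essentially the same route as the paper: a cutoff in the $t$-direction whose $\overline\partial$ has bounded pointwise norm because $|d\overline t|_{ds^2}\sim\varepsilon^2-|t|^2$ compensates the $O(\delta^{-1})$ derivative (the paper's $\eta_v$ with $|\overline\partial\eta_v|_{ds^2}\leq C_0$), followed by the standard Friedrichs--H\"ormander boundary approximation near the smooth strongly pseudoconvex piece $Y_0$. Only note the trivial slip that, as written, $\chi_\delta(t)=\chi\bigl((\varepsilon-|t|)/\delta\bigr)$ equals $1$ near $Y_1$ and $0$ in the bulk, so you should use $1-\chi$ there to match your (correct) verbal description.
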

\begin{proof}
For any $v\in\mathbb N$, choose $\eta_v\in C_0^\infty(\Delta_\varepsilon)$ such that $\eta_v(t)\equiv1$ when $|t|<\varepsilon-\frac{1}{v}$, $\eta_v(t)\equiv0$ when $|t|>\varepsilon-\frac{1}{2v}$. Then $|D\eta_v|\leq Cv$  and the point wise norm $|\overline\partial \eta_v|_{ds^2}\leq C_0$ for some constants $C, C_0$ independent of $v$. Hence, for any $u\in{\rm Dom}(\overline\partial)\cap{\rm Dom}(\overline\partial_{\Lambda_{\tau, k}}^\ast)\cap L^2_{(0, 1)}(\mathcal M_\varepsilon, \Lambda_{\tau, k})$, $\overline\partial(\eta_v u)\rightarrow\overline\partial u$ and $\overline\partial^{\ast}_{\Lambda_{\tau, k}}(\eta_v u)\rightarrow\overline\partial^{\ast}_{\Lambda_{\tau, k}} u$ in the respected $\Lambda_{\tau, k}$-weighted $L^2$-norms. Now applying the Friedrich-H\"ormander approximation theorem, we see that for any $u\in{\rm Dom}(\overline\partial)\cap{\rm Dom}(\overline\partial_{\Lambda_{\tau, k}}^\ast)\cap L^2_{(0, 1)}(\mathcal M_\varepsilon, \Lambda_{\tau, k})$, there is a sequence $\{u_m\}\subset{\rm Dom}(\overline\partial_{\Lambda_{\tau, k}}^\ast)$, each of which is compactly supported along $t$-direction and is smooth up to the boundary of $\overline{\mathcal M_\varepsilon}$ such that
\begin{equation}
u_m\rightarrow u, \overline\partial u_m\rightarrow\overline\partial u, \overline\partial^\ast_{\Lambda_{\tau, k}}u_m\rightarrow\overline\partial_{\Lambda_{\tau, k}}^\ast u
\end{equation}
in the respect $\Lambda_{\tau, k}$-weighted $L^2$-norms.
 \end{proof}

Let $u\in{\rm Dom}(\overline\partial^\ast_{\Lambda_{\tau, k}})\cap\Omega^{0, 1}(\overline {\mathcal M_\varepsilon})$. We assume that $u$ has compact support along $t$-direction. For $p\in\overline {\mathcal M_\varepsilon}$, there exists a neighborhood $U_\alpha$ of $p$ in $\mathcal M'$ and orthonormal basis $\{\omega^j\}_{j=0}^2$ given as in (\ref{basis}) over $U_\alpha$. By partition of unity, we assume that $u$ has compact support in some $U_\alpha\cap\overline {\mathcal M_\varepsilon}$. Write $u=\sum_{j=0}^2 u_j\overline\omega^j$. Since $u$ has compact support along $t$-direction and  satisfies $\overline\partial$-Neumann boundary condition, so we have $u|_{Y_1}=0$ and
 \begin{equation}
\sum_{j=0}^2 L_j(r)u_j=0~\text{along}~\overline {Y_0}\setminus Y_1.
\end{equation}

Write the volume form on $Y_0$ with respect to the Euclidean metric  on some coordinate chart as $ds$. Also, in what follows, we write $O(A)$ for a quantity such that $|O(A)|\leq C |A|$ with $C$ independent of the weight
function (namely, independent of $\tau, k$).
\begin{theorem}[Basic estimate]\label{basic estimate}
There exists $\tau, k$ sufficiently  large such that
for any $u\in{\rm Dom}(\overline\partial_{\Lambda_{\tau, k}}^\ast)\cap\Omega^{0, 1}(\overline{\mathcal{M_\varepsilon}})$ with $u$ having compact support along $t$-direction, we have
\begin{equation}\label{2018-4-15-e3}
\|\overline\partial u\|_{\Lambda_{\tau, k}}^2+\|\overline\partial_{\Lambda_{\tau, k}}^\ast u\|^2_{\Lambda_{\tau, k}} \gtrsim \|u\|^2_{\Lambda_{\tau, k}}+\sum_{\alpha, \beta=0}^2\int_{Y_0}r_{\alpha\overline\beta} u_\alpha\overline u_\beta e^{-{\Lambda_{\tau, k}}} h_0(z, t)e^{2\eta(t)}ds,
\end{equation}
where  $\{r_{\alpha\overline\beta}\}$ are given by $\partial\overline\partial r=\sum_{\alpha, \beta=0}^2 r_{\alpha\overline\beta}\omega^\alpha\wedge\overline \omega^\beta$.  Moreover, For $u\in{\rm Dom}(\overline\partial)\cap{\rm Dom}(\overline\partial_{\Lambda_{\tau, k}}^\ast)\cap L^2_{(0, 1)}({\mathcal{M_\varepsilon}}, \Lambda_{\tau, k})$, we have
\begin{equation}
\|\overline\partial u\|_{\Lambda_{\tau, k}}^2+\|\overline\partial_{\Lambda_{\tau, k}}^\ast u\|^2_{\Lambda_{\tau, k}}\gtrsim \|u\|^2_{\Lambda_{\tau, k}}.
\end{equation}
\end{theorem}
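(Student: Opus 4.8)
The plan is to obtain the first inequality from the weighted Morrey--Kohn--H\"ormander a priori identity for smooth forms, and then to deduce the second from it by the approximation of Lemma~\ref{density lemma} together with the positivity of the Levi term on the strongly pseudoconvex piece $Y_0$. So I first fix $u\in{\rm Dom}(\overline\partial^\ast_{\Lambda_{\tau,k}})\cap\Omega^{0,1}(\overline{\mathcal M_\varepsilon})$ with compact support along the $t$-direction; then $u$ vanishes near $Y_1$, is genuinely smooth up to the only remaining boundary piece $Y_0$, and on $Y_0$ satisfies the $\overline\partial$-Neumann relation $\sum_{j=0}^{2}L_j(r)u_j=0$. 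Using a partition of unity subordinate to the finite cover $\{U_\alpha\}$ and the orthonormal coframe $\{\omega^j\}_{j=0}^{2}$ of (\ref{basis}), I would write $u=\sum_j u_j\overline\omega^j$, express $\overline\partial u$ and $\overline\partial^\ast_{\Lambda_{\tau,k}}u$ through the frame derivatives $\overline L_j u_k$, the Lie brackets of $\{L_j\}$, and the first-order quantities $L_j\Lambda_{\tau,k}$, and integrate by parts. Because $u\equiv0$ near $Y_1$ the only boundary contribution is over $Y_0$, and the Neumann relation annihilates every boundary term except the Levi term, giving
\begin{equation*}
\|\overline\partial u\|_{\Lambda_{\tau,k}}^{2}+\|\overline\partial^\ast_{\Lambda_{\tau,k}}u\|_{\Lambda_{\tau,k}}^{2}=\sum_{j,k}\|\overline L_j u_k\|_{\Lambda_{\tau,k}}^{2}+\sum_{j,k}\int_{\mathcal M_\varepsilon}(\Lambda_{\tau,k})_{j\overline k}u_j\overline u_k\,e^{-\Lambda_{\tau,k}}dv+\int_{Y_0}\sum_{j,k}r_{j\overline k}u_j\overline u_k\,e^{-\Lambda_{\tau,k}}h_0 e^{2\eta}ds+E(u),
\end{equation*}
where $\partial\overline\partial\Lambda_{\tau,k}=\sum(\Lambda_{\tau,k})_{j\overline k}\omega^j\wedge\overline\omega^k$, and, after the usual rearrangement, $E(u)$ is a finite sum of terms $\int(\cdot)u_j\overline u_k\,e^{-\Lambda_{\tau,k}}dv$ and $\mathrm{Re}\int(\cdot)u_j\overline{\overline L_k u_l}\,e^{-\Lambda_{\tau,k}}dv$ coming from the torsion of $ds^{2}$, the frame brackets, and the partition of unity, whose coefficients are \emph{bounded} on $\overline{\mathcal M_\varepsilon}$ and independent of $\tau,k$ --- the key point being that the factor $\varepsilon^{2}-|t|^{2}$ built into $L_0$ cancels the singularities of $\log(\varepsilon^{2}-|t|^{2})$ and of its derivatives, so that $L_j\Lambda_{\tau,k}$ and all coefficients of $E(u)$ stay bounded up to $Y_1$.

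The heart of the matter is the lower bound for the weight term. With $\Lambda_{\tau,k}=-\tau\log(\varepsilon^{2}-|t|^{2})+k\varphi$, a direct computation gives
\begin{equation*}
\partial\overline\partial\bigl(-\log(\varepsilon^{2}-|t|^{2})\bigr)=\frac{\varepsilon^{2}}{(\varepsilon^{2}-|t|^{2})^{2}}\,dt\wedge d\overline t=\frac{\varepsilon^{2}}{b_0^{2}}\,\omega^{0}\wedge\overline\omega^{0}\ \ge\ 0,
\end{equation*}
so in the coframe $\{\omega^j\}$ its only nonzero component lies in the $\omega^{0}$-direction and is $\ge c_1>0$ there (as $b_0\in C^\infty(U_\alpha\cap\overline{\mathcal M_\varepsilon})$ is positive). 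On the other hand $\varphi$ is strictly plurisubharmonic and smooth up to the boundary by Lemma~\ref{2018-05-09-l1}, so on each of the finitely many charts its complex Hessian is bounded below by a fixed positive constant times the Euclidean one; in the coframe $\{\omega^j\}$ this makes the $\omega^{1},\omega^{2}$ block of $(\Lambda_{\tau,k})_{j\overline k}$ at least $c_2 k$ on all of $\overline{\mathcal M_\varepsilon}$, while in the $\omega^{0}$-direction the contribution of $k\,\partial\overline\partial\varphi$ degenerates near $Y_1$ --- precisely where the $\tau$-term is of size $\asymp\tau(\varepsilon^{2}-|t|^{2})^{-2}$, because the $\partial/\partial t$-component of $L_0$ carries the vanishing factor $\varepsilon^{2}-|t|^{2}$. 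Hence, for $\tau$ and $k$ large, $\sum_{j,k}(\Lambda_{\tau,k})_{j\overline k}u_j\overline u_k\ge c(\tau,k)|u|^{2}$ pointwise with $c(\tau,k)\to\infty$. Now the elementary bound $2\,\mathrm{Re}(a\overline b)\ge-\delta|b|^{2}-\delta^{-1}|a|^{2}$ with $\delta<1$ estimates the first-order part of $E(u)$ below by $-\delta\sum_{j,k}\|\overline L_j u_k\|_{\Lambda_{\tau,k}}^{2}-C_\delta\|u\|^{2}_{\Lambda_{\tau,k}}$ and its zeroth-order part below by $-C\|u\|^{2}_{\Lambda_{\tau,k}}$; discarding the nonnegative remainder $(1-\delta)\sum_{j,k}\|\overline L_j u_k\|_{\Lambda_{\tau,k}}^{2}$ and choosing $\tau,k$ so large that $c(\tau,k)\ge C_\delta+C+1$ turns the identity into (\ref{2018-4-15-e3}).

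For the second inequality I would approximate an arbitrary $u\in{\rm Dom}(\overline\partial)\cap{\rm Dom}(\overline\partial^\ast_{\Lambda_{\tau,k}})\cap L^{2}_{(0,1)}(\mathcal M_\varepsilon,\Lambda_{\tau,k})$ by the smooth, $t$-compactly supported $u_m\in{\rm Dom}(\overline\partial^\ast_{\Lambda_{\tau,k}})$ of Lemma~\ref{density lemma}, apply (\ref{2018-4-15-e3}) to each $u_m$, and note that since $u_m\in{\rm Dom}(\overline\partial^\ast_{\Lambda_{\tau,k}})$ its boundary values satisfy $\sum_j L_j(r)(u_m)_j=0$ on $Y_0$, i.e. the vector $((u_m)_0,(u_m)_1,(u_m)_2)$ lies in the complex tangent space of $Y_0$, where strong pseudoconvexity forces $\sum_{j,k}r_{j\overline k}(u_m)_j\overline{(u_m)_k}\ge0$; thus $\|\overline\partial u_m\|_{\Lambda_{\tau,k}}^{2}+\|\overline\partial^\ast_{\Lambda_{\tau,k}}u_m\|_{\Lambda_{\tau,k}}^{2}\gtrsim\|u_m\|^{2}_{\Lambda_{\tau,k}}$, and letting $m\to\infty$ (using the convergence in Lemma~\ref{density lemma}) gives the result.

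The step I expect to be the main obstacle is the analysis near $Y_1$, where $ds^{2}$ degenerates and the frame $\{L_j\}$, the metric torsion, and the frame brackets all pick up coefficients singular as $|t|\to\varepsilon$: one must check both that these singularities are cancelled by the factor $\varepsilon^{2}-|t|^{2}$ in $L_0$ (so that $E(u)$ has bounded coefficients and the a priori identity is valid up to $Y_1$) and that the positivity produced by $-\tau\log(\varepsilon^{2}-|t|^{2})$ is concentrated in exactly the degenerate $\omega^{0}$-direction, so that together with $k\,\partial\overline\partial\varphi$ it dominates $ds^{2}$ with a constant uniform in $u$. This is why both parameters are needed: $\tau$ controls the $t$-direction near $Y_1$, $k$ the fibre directions everywhere. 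A more routine but necessary point is to confirm that among all boundary contributions from the integration by parts only the Levi term over $Y_0$ survives, which is where the $\overline\partial$-Neumann condition enters.
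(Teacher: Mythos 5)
Your plan is correct and follows essentially the same route as the paper's proof: a weighted Morrey--Kohn--H\"ormander identity in the coframe of (\ref{basis}) with Morrey's trick producing the Levi term on $Y_0$, positivity of the complex Hessian of the weight (the $\tau$-term controlling the degenerate $\omega^0$-direction, $k\varphi$ the fiber directions), absorption of the weight-independent error terms by a big--small constant argument, nonnegativity of the boundary term by strong pseudoconvexity, and Lemma~\ref{density lemma} for the general case. The only imprecision is your remark that $L_j\Lambda_{\tau,k}$ is ``bounded independent of $\tau,k$'' (it grows linearly in $\tau,k$); this is harmless because, exactly as in the paper's computation, those first-order weight terms contracted with the torsion coefficients are absorbed into the invariant Hessian $\partial\overline\partial\Lambda_{\tau,k}$ rather than surviving in $E(u)$.
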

\begin{proof}
From Lemma \ref{density lemma}, we only need prove the first part of this theorem. By partition of unity, we assume that ${\rm supp}~ u\subset U_\alpha\cap\overline{\mathcal M_\varepsilon}$ for some neighborhood $U_\alpha$. Let $\{\omega^j\}$ be the orthonormal basis defined in (\ref{basis}) over $U_{\alpha}$.
\begin{equation}
\overline\partial u=\sum_{j<k}(\overline L_j u_k-\overline L_k u_j)\overline\omega^j\wedge\overline\omega^k+{\rm lower~order~terms}.
\end{equation}
Write $\||u\||^2_{\Lambda_{\tau, k}}=\sum_{\alpha, \beta=0}^2\|\overline L_\alpha u_\beta\|^2_{\Lambda_{\tau, k}}+\|u\|^2_{\Lambda_{\tau, k}}$. Then
\begin{equation}\label{norm1}
\begin{split}
\|\overline\partial u\|_{\Lambda_{\tau, k}}^2=&\sum\limits_{\alpha, \beta=0}^2\|\overline L_\alpha u_\beta\|_{\Lambda_{\tau, k}}^2-\sum\limits_{\alpha, \beta=0}^2\int_{U_p}\overline L_\alpha(u_\beta)\overline{\overline L_\beta(u_\alpha)}e^{-\Lambda_{\tau, k}} h_0e^{2\eta(t)}d_{Eucl}\\
&+O(\||u\||_{\Lambda_{\tau, k}}\cdot\|u\|_{\Lambda_{\tau, k}}).
\end{split}
\end{equation}
Let $v=\sum_{j=0}^2v_j\overline\omega^j$ and let $\chi\in C_0^\infty(U_\alpha\cap\mathcal M_\varepsilon)$. We have
\begin{equation}
\begin{split}
\int_{U_p}\langle\overline\partial\chi, v\rangle e^{-\Lambda_{\tau, k}}h_0e^{2\eta(t)}d_{Eucl}&=\sum_{j=0}^2\int_{U_p}\overline L_j(\chi)\overline v_j e^{-\Lambda_{\tau, k}}h_0e^{2\eta(t)}d_{Eucl}\\
&=\sum_{j=0}^2\int_{U_p}\chi\overline L_j^\ast(\overline v_j\tilde h)\tilde h^{-1}\tilde h d_{Eucl},
\end{split}
\end{equation}
where $\tilde h=e^{-\widetilde{\Lambda_{\tau, k}}}:=e^{-\Lambda_{\tau, k}}h_0e^{2\eta(t)}$ and $\overline L_j^\ast$ is the formal adjoint of $\overline L_j$ with respect to the Euclidean metric. Notice that $\overline L_j^\ast=-L_j+K_j$, with $K_j\in C^\infty(U_p)$ for all $j$. Since $u\in{\rm Dom}(\overline\partial^\ast_{\Lambda_{\tau, k}})$, we have
\begin{equation*}
\overline\partial^\ast_{\Lambda_{\tau,k}} u=-\sum_{j=0}^2\delta_j u_j+{\text{lower~order~terms}}
\end{equation*}
and
\begin{equation}\label{norm2}
\|\overline\partial_{\Lambda_{\tau, k}}^\ast u\|^2_{\Lambda_{\tau, k}}=\sum_{\alpha, \beta=0}^2(\delta_\alpha u_\alpha, \delta_\beta u_\beta)_{\Lambda_{\tau, k}}+O(\|\overline\partial^\ast_{\Lambda_{\tau, k}}u\|_{\Lambda_{\tau, k}}\cdot\|u\|_{\Lambda_{\tau, k}})+O(\|u\|_{\Lambda_{\tau, k}}^2),
\end{equation}
where $(\cdot, \cdot)_{\Lambda_{\tau, k}}$ is the weighted inner product with respect to $ds^2$ and the weight function $\Lambda_{\tau, k}$. Here, $\delta_j u_j=e^{\widetilde{\Lambda_{\tau, k}}}L_j(e^{-\widetilde{\Lambda_{\tau, k}}}u_j)$.
Combining (\ref{norm1}) and (\ref{norm2}), we have
\begin{equation}
\begin{split}
&\|\overline\partial u\|^2_{\Lambda_{\tau, k}}+\|\overline\partial^\ast_{\Lambda_{\tau, k}}u\|_{\Lambda_{\tau, k}}^2\\
=&\sum_{\alpha, \beta=0}^2\|\overline L_\alpha u_\beta\|_{\Lambda_{\tau, k}}^2+\sum_{\alpha, \beta=0}^2(\delta_\alpha u_\alpha, \delta_\beta u_\beta)_{\Lambda_{\tau, k}}-\sum_{\alpha, \beta=0}^2(\overline L_\alpha u_\beta, \overline L_\beta u_\alpha)_{\Lambda_{\tau, k}}\\
&+O(\|\overline\partial^\ast_{\Lambda_{\tau, k}}u\|_{\Lambda_{\tau, k}}\cdot\|u\|_{\Lambda_{\tau, k}})+O(\||u\||_{\Lambda_{\tau, k}}\cdot\|u\|_{\Lambda_{\tau, k}}).
\end{split}
\end{equation}
Since $u$ satisfies $\overline\partial$-Neumann boundary condition, then integrating by parts
\begin{equation}\label{inte-18-4-11}
\begin{split}
&(\delta_\alpha u_\alpha, \delta_\beta u_\beta)_{\Lambda_{\tau, k}}\\
&=-(\overline L_\beta\delta_\alpha u_\alpha, u_\beta)_{\Lambda_{\tau, k}}+O(\|\delta_\alpha u_\alpha\|_{\Lambda_{\tau, k}}\cdot\|u\|_{\Lambda_{\tau, k}})\\
&=((\delta_\alpha\overline L_\beta-\overline L_\beta\delta_\alpha)u_\alpha, u_\beta)_{\Lambda_{\tau, k}}-(\delta_\alpha\overline L_\beta u_\alpha, u_\beta)_{\Lambda_{\tau, k}}+O(\|\delta_\alpha u_\alpha\|_{\Lambda_{\tau, k}}\cdot\|u\|_{\Lambda_{\tau, k}})\\
&=((\delta_\alpha\overline L_\beta-\overline L_\beta\delta_\alpha)u_\alpha, u_\beta)_{\Lambda_{\tau, k}}+(\overline L_\beta u_\alpha, \overline L_\alpha u_\beta)_{\Lambda_{\tau, k}}-\int_{Y_0}L_\alpha(r)\overline L_\beta(u_\alpha)\overline u_\beta e^{-\widetilde{\Lambda_{\tau, k}}}ds\\
&~~~~~+O(\||u\||_{\Lambda_{\tau, k}}\cdot\|u\|_{\Lambda_{\tau, k}}).
\end{split}
\end{equation}
Since $u$ satisfies $\overline\partial$-Neumann boundary condition,  it follows that   $\sum_{\alpha=0}^2 u_\alpha L_\alpha (r)=0$ on $Y_0\setminus Y_1$. Making use of Morrey's trick we have on $\overline {Y_0}\setminus Y_1$,
\begin{equation*}
\sum_{\beta=0}^2\overline u_\beta\overline L_\beta(\sum_{\alpha=0}^2 u_\alpha L_\alpha(r))=0,
\end{equation*}
that is,
\begin{equation}\label{Morrey trick}
-\sum_{\alpha, \beta=0}^2 L_\alpha(r)\overline L_\beta(u_\alpha)\overline u_\beta=\sum_{\alpha, \beta=0}^2\overline L_\beta L_\alpha(r)u_\alpha\overline u_\beta.
\end{equation}
Substituting (\ref{Morrey trick}) to (\ref{inte-18-4-11}), we have
\begin{equation}\label{2018-4-12-e4}
\begin{split}
&\|\overline\partial u\|^2_{\Lambda_{\tau, k}}+\|\overline\partial^\ast_{\Lambda_{\tau, k}}u\|_{\Lambda_{\tau, k}}^2\\
&=\sum_{\alpha, \beta=0}^2\|\overline L_\alpha u_\beta\|_{\Lambda_{\tau, k}}^2+\sum_{\alpha, \beta=0}^2((\delta_\alpha\overline L_\beta-\overline L_\beta\delta_\alpha)u_\alpha, u_\beta)_{\Lambda_{\tau, k}}+\sum_{\alpha, \beta=0}^2\int_{Y_0}\overline L_\beta L_\alpha(r)u_\alpha\overline u_\beta e^{-\widetilde{\Lambda_{\tau, k}}}ds\\
&+O(\||u\||_{\Lambda_{\tau, k}}\cdot\|u\|_{\Lambda_{\tau, k}})+O(\|\overline\partial^\ast_{\Lambda_{\tau, k}}u\|_{\Lambda_{\tau, k}}\cdot\|u\|_{\Lambda_{\tau, k}}).
\end{split}
\end{equation}
By direct calculation,
\begin{equation}\label{2018-4-12-e2}
((\delta_\alpha\overline L_\beta-\overline L_\beta\delta_\alpha)u_\alpha, u_\beta)_{\Lambda_{\tau, k}}=((L_\alpha\overline L_\beta-\overline L_\beta L_\alpha)u_\alpha, u_\beta)_{\Lambda_{\tau, k}}+((\overline L_\beta L_\alpha\widetilde{\Lambda_{\tau, k}})u_\alpha, u_\beta)_{\Lambda_{\tau, k}}.
\end{equation}
Write $\partial\omega^{\overline\beta}=C^{\overline\beta}_{t\overline s}w^t\wedge w^{\overline s}$  and
$\overline\partial\omega^\beta=C^\beta_{t\overline s}\omega^t\wedge\omega^{\overline s}$, where both $C^{\overline\beta}_{t\overline s}$, $C^\beta_{t\overline s}\in C^\infty(U_p\cap\overline{\mathcal M_\varepsilon})$ and $C^\beta_{t\overline s}=-\overline{C^{\overline\beta}_{s\overline t}}.$
For any smooth function $g$, write $\partial\overline\partial g=\sum_{\alpha, \beta=0}^2 g_{\alpha\overline\beta}\omega^\alpha\wedge\omega^{\overline \beta}.$ By direct calculation,
\begin{equation}
\partial\overline\partial g=[L_\alpha\overline L_\beta g+(\overline L_tg)C^{\overline t}_{\alpha, \overline\beta}]\omega^\alpha\wedge\omega^{\overline \beta},
\end{equation}
\begin{equation}
\overline\partial\partial g=[-\overline L_\beta L_\alpha g+(L_tg)C^t_{\alpha\overline \beta}]\omega^\alpha\wedge \overline\omega^\beta.
\end{equation}
and
\begin{equation}\label{2018-4-12-no1}
g_{\alpha\overline\beta}=L_\alpha\overline L_\beta g+(\overline L_t g)C^{\overline t}_{\alpha \overline\beta}.
\end{equation}
The $\partial\overline\partial g+\overline\partial\partial g=0$ implies that
\begin{equation}\label{2018-4-12-e1}
L_\alpha\overline L_\beta g-\overline L_\beta L_\alpha g=(\overline L_t g)\overline{C^t_{\beta\overline\alpha}}-(L_tg)C^{t}_{\alpha\overline\beta}.
\end{equation}
Substituting (\ref{2018-4-12-e1}) to the first term on the right hand side of (\ref{2018-4-12-e2}), we have
\begin{equation}
\begin{split}
((L_\alpha\overline L_\beta-\overline L_\beta L_\alpha)u_\alpha, u_\beta)_{\Lambda_{\tau, k}}
=(\overline{C^r_{\beta\overline\alpha}}(\overline L_r u_\alpha)-C^t_{\alpha\overline\beta} L_tu_\alpha, u_\beta)_{\Lambda_{\tau, k}}.
\end{split}
\end{equation}
Integrating by parts,
\begin{equation}\label{2018-4-12-e3}
\begin{split}
&-(C^t_{\alpha\overline\beta} L_t(u_\alpha), u_\beta)_{\Lambda_{\tau, k}}\\
&=-\int_{Y_0}C^t_{\alpha\overline\beta}L_t(r)u_\alpha \overline u_{\beta}e^{-\widetilde{\Lambda_{\tau, k}}}d_{Eucl}-\int_{U_p}L_t(\widetilde{\Lambda_{\tau, k}})C^t_{\alpha\overline\beta}u_\alpha\overline u_\beta e^{-\widetilde{\Lambda_{\tau, k}}}d_{Eucl}\\
&+\int_{U_p}L_t(C^t_{\alpha\overline\beta})u_\alpha\overline u_\beta e^{-\widetilde{\Lambda_{\tau, k}}}d_{Eucl}+\int_{U_p}C^t_{\alpha\overline\beta}u_\alpha L_t(\overline u_\beta)e^{-\widetilde{\Lambda_{\tau, k}}}d_{Eucl}+O(\|u\|_{\Lambda_{\tau, k}}^2).
\end{split}
\end{equation}
Combining (\ref{2018-4-12-e2}), (\ref{2018-4-12-e1}), (\ref{2018-4-12-e3}), (\ref{2018-4-12-e4}) and using the notation (\ref{2018-4-12-no1}) we have
\begin{equation}
\begin{split}
&\|\overline\partial u\|_{\Lambda_{\tau, k}}^2+\|\overline\partial_{\Lambda_{\tau, k}}^\ast u\|_{\Lambda_{\tau, k}}^2\\
=&\sum_{\alpha, \beta=0}^2\|\overline L_\alpha u_\beta\|_{\Lambda_{\tau, k}}^2
+\sum_{\alpha, \beta=0}^2\int_{U_p}(\widetilde{\Lambda_{\tau, k}})_{\alpha\overline\beta}u_\alpha\overline u_\beta e^{-\widetilde{\Lambda_{\tau, k}}}d_{Eucl}
+\sum_{\alpha, \beta=0}^2\int_{Y_0}r_{\alpha\overline\beta} u_\alpha\overline u_\beta e^{-\widetilde{\Lambda_{\tau, k}}}ds\\
&+O(\|\overline\partial_{\Lambda_{\tau, k}}^\ast u\|_{\Lambda_{\tau, k}}\cdot\|u\|_{\Lambda_{\tau, k}})+O(\||u\||_{\Lambda_{\tau, k}}\cdot\|u\|_{\Lambda_{\tau, k}}).
\end{split}
\end{equation}
Recall $$\widetilde{\Lambda_{\tau, k}}=(-\tau+2)\log{(\varepsilon^2-|t|^2)}+k\varphi(z, t)-\log{h_0(z, t)}$$ and $\partial\overline\partial\widetilde{\Lambda_{\tau, k}}=\sum_{\alpha, \beta=0}^n(\widetilde{\Lambda_{\tau, k}})_{\alpha\overline\beta}\omega^\alpha\wedge\omega^{\overline\beta}$, $(-\log{(\varepsilon^2-|t|^2)})_{0\overline0}=1$, then we can choose $\tau, k$ sufficiently large such that
\begin{equation}
((\widetilde{\Lambda_{\tau, k}})_{\alpha\overline\beta})\geq k(\delta_{\alpha\beta}),
\end{equation}
where $(\delta_{\alpha\beta})$ is the identity matrix. Then by big-small constants argument, we have
\begin{equation}\label{basic estimate1}
\|\overline\partial u\|_{\Lambda_{\tau, k}}^2+\|\overline\partial_{\Lambda_{\tau, k}}^\ast u\|_{\Lambda_{\tau, k}}^2 \gtrsim \sum_{\alpha, \beta=0}^2\|\overline L_\alpha u_\beta\|_{\Lambda_{\tau, k}}^2 + k\|u\|^2_{\Lambda_{\tau, k}}+\sum_{\alpha, \beta=0}^2\int_{Y_0}r_{\alpha\overline\beta} u_\alpha\overline u_\beta e^{-\widetilde{\Lambda_{\tau, k}}}ds.
\end{equation}
Since $Y_0$ is strongly pseudoconvex, then the boundary term in (\ref{basic estimate1}) is positive and we have
\begin{equation}
\|\overline\partial u\|_{\Lambda_{\tau, k}}^2+\|\overline\partial_{\Lambda_{\tau, k}}^\ast u\|_{\Lambda_{\tau, k}}^2\gtrsim k\|u\|^2_{\Lambda_{\tau, k}}.
 \end{equation}
 Then by partition of unity and  Lemma \ref{density lemma} we get the conclusion of the second part of Theorem \ref{basic estimate}.
\end{proof}
The basic estimate in theorem \ref{basic estimate} implies that there is no obstruction for solving the $\overline\partial$-equation on $\mathcal M_\varepsilon$. That is, if we denote by $$H^{q}_{(2)}(\mathcal M_\varepsilon, \Lambda_{\tau, k}): =\frac{{\rm Ker}~\overline\partial: L^2_{(0, q)}(\mathcal M_\varepsilon, \Lambda_{\tau, k})\rightarrow L^2_{(0, q+1)}(\mathcal M_\varepsilon, \Lambda_{\tau, k})}{{\rm Im}~\overline\partial: L^2_{(0, q-1)}(\mathcal M_\varepsilon, \Lambda_{\tau, k})\rightarrow L^2_{(0, q)}(\mathcal M_\varepsilon, \Lambda_{\tau, k})}$$ the $L^2$ Dolbeault cohomology, then we have the following
\begin{corollary}\label{2018-07-08e1}
 For sufficiently large $\tau$ and $k$, when $q\geq 1$,
$H^q_{(2)}(\mathcal M_\varepsilon, \Lambda_{\tau, k})=0$.
\end{corollary}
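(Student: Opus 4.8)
The plan is to derive the corollary from the basic estimate of Theorem~\ref{basic estimate} by the standard H\"ormander $L^2$-existence argument, so that all the analysis is already contained in the a priori estimate proved above.

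First I would observe that, although Theorem~\ref{basic estimate} is stated only for $(0,1)$-forms, the estimate
\[
\|\overline\partial u\|_{\Lambda_{\tau, k}}^2+\|\overline\partial_{\Lambda_{\tau, k}}^\ast u\|^2_{\Lambda_{\tau, k}}\gtrsim \|u\|^2_{\Lambda_{\tau, k}}
\]
holds for every $u\in{\rm Dom}(\overline\partial)\cap{\rm Dom}(\overline\partial_{\Lambda_{\tau, k}}^\ast)\cap L^2_{(0, q)}(\mathcal M_\varepsilon, \Lambda_{\tau, k})$ and every $q\geq1$, provided $\tau$ and $k$ are large enough. Indeed the Bochner--Kodaira--Morrey--Kohn identity, the integrations by parts using the $\overline\partial$-Neumann boundary condition, and Morrey's trick~(\ref{Morrey trick}) carry over verbatim to degree $q$; the interior term is controlled from below by $k\|u\|^2_{\Lambda_{\tau, k}}$ once $((\widetilde{\Lambda_{\tau, k}})_{\alpha\overline\beta})\geq k(\delta_{\alpha\beta})$; and the boundary integral over $Y_0$ is nonnegative because $Y_0$ is strongly pseudoconvex. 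The approximation Lemma~\ref{density lemma} extends to $(0,q)$-forms in the same fashion. For $q=1$ this is literally the second assertion of Theorem~\ref{basic estimate}, so nothing extra is needed there.

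Next, fixing $q\geq1$ and $f\in L^2_{(0, q)}(\mathcal M_\varepsilon, \Lambda_{\tau, k})$ with $\overline\partial f=0$, I would produce a solution of $\overline\partial u=f$ as follows. Given $v\in{\rm Dom}(\overline\partial_{\Lambda_{\tau, k}}^\ast)\cap L^2_{(0, q)}$, decompose it orthogonally as $v=v_1+v_2$ with $v_1\in\Ker\overline\partial$ and $v_2\perp\Ker\overline\partial$. Since ${\rm Im}\,\overline\partial\subset\Ker\overline\partial$ we get $v_2\in({\rm Im}\,\overline\partial)^\perp=\Ker\overline\partial_{\Lambda_{\tau, k}}^\ast$, whence $v_1\in{\rm Dom}(\overline\partial_{\Lambda_{\tau, k}}^\ast)$, $\overline\partial_{\Lambda_{\tau, k}}^\ast v_1=\overline\partial_{\Lambda_{\tau, k}}^\ast v$, $\overline\partial v_1=0$, and $f\perp v_2$ (as $f\in\Ker\overline\partial$). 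Applying the estimate of the previous paragraph to $v_1$,
\[
|(v, f)_{\Lambda_{\tau, k}}|=|(v_1, f)_{\Lambda_{\tau, k}}|\leq\|v_1\|_{\Lambda_{\tau, k}}\|f\|_{\Lambda_{\tau, k}}\leq C\,\|\overline\partial_{\Lambda_{\tau, k}}^\ast v\|_{\Lambda_{\tau, k}}\,\|f\|_{\Lambda_{\tau, k}}.
\]
Thus $\overline\partial_{\Lambda_{\tau, k}}^\ast v\mapsto(v, f)_{\Lambda_{\tau, k}}$ is a well-defined bounded linear functional on $\{\overline\partial_{\Lambda_{\tau, k}}^\ast v:v\in{\rm Dom}(\overline\partial_{\Lambda_{\tau, k}}^\ast)\}\subset L^2_{(0, q-1)}$, and by Hahn--Banach and the Riesz representation theorem there is $u\in L^2_{(0, q-1)}(\mathcal M_\varepsilon, \Lambda_{\tau, k})$ with $\|u\|_{\Lambda_{\tau, k}}\leq C\|f\|_{\Lambda_{\tau, k}}$ such that $(\overline\partial_{\Lambda_{\tau, k}}^\ast v, u)_{\Lambda_{\tau, k}}=(v, f)_{\Lambda_{\tau, k}}$ for every $v\in{\rm Dom}(\overline\partial_{\Lambda_{\tau, k}}^\ast)$. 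Since $\overline\partial$ is taken as its maximal closed extension, this is equivalent to $u\in{\rm Dom}(\overline\partial)$ and $\overline\partial u=f$. Hence $\Ker\overline\partial={\rm Im}\,\overline\partial$ in degree $q$, i.e. $H^q_{(2)}(\mathcal M_\varepsilon, \Lambda_{\tau, k})=0$ for all $q\geq1$.

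I expect the only genuine difficulty to be the point already resolved inside Theorem~\ref{basic estimate}: the validity of the a priori estimate up to the non-smooth corner $\overline{Y_0}\cap Y_1$ of $\partial\mathcal M_\varepsilon$, which is precisely why the weight $-\tau\log(\varepsilon^2-|t|^2)$ pushing $Y_0$ to infinity, and the density Lemma~\ref{density lemma}, are built into the set-up. Once that estimate is granted in each degree, the passage to all $q\geq1$ and the functional-analytic solvability argument above are routine.
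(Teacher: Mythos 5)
Your proposal is correct and follows essentially the route the paper intends: the corollary is just the standard H\"ormander duality argument (estimate on $\Ker\overline\partial\cap{\rm Dom}(\overline\partial_{\Lambda_{\tau,k}}^\ast)$, then Hahn--Banach/Riesz) applied to the basic estimate of Theorem~\ref{basic estimate}, which the paper invokes without writing out. Your additional observation that the a priori estimate and the approximation Lemma~\ref{density lemma} carry over to $(0,q)$-forms for $q\geq 2$ is exactly what is needed to justify the statement for all $q\geq 1$, a point the paper passes over in silence.
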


\begin{remark}
The assumption that each fiber of the family $\pi: \overline {\mathcal M_\varepsilon}\rightarrow\Delta_\varepsilon$ is  a stein  manifold with strongly pseudoconvex boundary is crucial in this paper. Without this assumption there will be compact analytic variety in $\mathcal M_\varepsilon$ and we can not find  a strictly plurisubharmonic function as in lemma \ref{2018-05-09-l1} which play an important role as a weight function in establishing the $L^2$-estimate for the $\overline\partial$-operator on $\mathcal M_\varepsilon$. Without such weight function, the obstruction $H^q_{(2)}(\mathcal M_\varepsilon, \Lambda_{\tau, k}), q\geq 1$ of solving the $\overline\partial$-equation on $\mathcal M_\varepsilon$ may be an infinite dimensional space.
\end{remark}

Let $\omega$ be a $\overline\partial$-closed $\Lambda_{\tau, k}$-weighted $L^2$-integrable $(0, 1)$-form. Then by Corollary \ref{2018-07-08e1}, the $\overline\partial$-equation $\overline\partial u=\omega$ always has a unique solution $u\in L^2_{(0, 1)}(\mathcal M_\varepsilon, \Lambda_{\tau, k})$ with $u\perp {\rm Ker}(\overline\partial).$ Next, we will show that the $\overline\partial$-equation on $\mathcal M_\varepsilon$ has boundary regularity.
\begin{theorem}\label{boundary regaularity}
Let $\omega$ be a closed $\Lambda_{\tau, k}$-weighted $L^2$-integrable $(0, 1)$-form. Suppose $\omega\in C^\infty(\overline{\mathcal M_\varepsilon}\setminus Y_1)$. There exist a unique $u\in L^2(\mathcal M_\varepsilon, \Lambda_{\tau, k})\cap C^\infty(\overline{\mathcal M_\varepsilon}\setminus Y_1)$ such that $\overline\partial u=\omega$ with $u\perp{\rm Ker}(\overline\partial).$
\end{theorem}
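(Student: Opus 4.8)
The plan is to derive boundary regularity from the basic estimate in Theorem \ref{basic estimate} via the standard elliptic/subelliptic machinery for the $\overline\partial$-Neumann problem, adapted to the weighted setting on $\mathcal M_\varepsilon$ which has been engineered (through the metric $ds^2$ and the weight $\Lambda_{\tau,k}$) to behave like a complete metric near $Y_1$ but to retain a genuine boundary along $Y_0$. Existence and uniqueness of $u\in L^2_{(0,0)}(\mathcal M_\varepsilon,\Lambda_{\tau,k})$ with $\overline\partial u=\omega$ and $u\perp\mathrm{Ker}(\overline\partial)$ is already furnished by Corollary \ref{2018-07-08e1}; only the $C^\infty$ regularity up to $Y_0$ remains. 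I would set $v=\overline\partial u\in L^2_{(0,1)}$, so that $v=\omega$, and observe that $u$ being the minimal solution means $u\in\mathrm{Dom}(\overline\partial^\ast_{\Lambda_{\tau,k}})$ with $\overline\partial^\ast_{\Lambda_{\tau,k}}u=0$; equivalently $u$ is the canonical solution and $\omega - \overline\partial\,\overline\partial^\ast_{\Lambda_{\tau,k}} N\omega$ type identities let us pass regularity questions to the Neumann operator. Concretely, it is cleaner to phrase things in terms of the complex Laplacian $\square=\overline\partial\,\overline\partial^\ast_{\Lambda_{\tau,k}}+\overline\partial^\ast_{\Lambda_{\tau,k}}\overline\partial$ acting on $(0,1)$-forms: the form $\psi:=\overline\partial u$ solves $\square\psi=\overline\partial\omega=0$ away from being the projection issue — more precisely one takes $\psi = \overline\partial u$, notes $\overline\partial^\ast_{\Lambda_{\tau,k}}\psi = \overline\partial^\ast_{\Lambda_{\tau,k}}\overline\partial u = \square u$ in the function sense, and reduces to showing $u$ is smooth up to $Y_0$ given $\square u\in C^\infty(\overline{\mathcal M_\varepsilon}\setminus Y_1)$ with $u$ and $\overline\partial u$ satisfying the $\overline\partial$-Neumann boundary conditions on $Y_0$.

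The key steps, in order, would be: (1) localize near a boundary point $p\in Y_0$ using the finite cover $\{U_\alpha\}$ and a partition of unity, and use the cutoff functions $\eta_v$ from Lemma \ref{density lemma} to push everything away from $Y_1$, so that all analysis takes place on $U_\alpha\cap\overline{\mathcal M_\varepsilon}$ with a metric and weight that are smooth up to $Y_0$; (2) establish interior elliptic regularity first — on the open part $\mathcal M_\varepsilon$ the operator $\square$ is elliptic, so $\omega\in C^\infty$ forces $u\in C^\infty$ in the interior by standard elliptic theory; (3) for regularity up to $Y_0$, exploit strong pseudoconvexity: the boundary term $\sum_{\alpha,\beta}\int_{Y_0} r_{\alpha\overline\beta}u_\alpha\overline u_\beta e^{-\Lambda_{\tau,k}}h_0 e^{2\eta(t)}\,ds$ in \eqref{2018-4-15-e3} is positive and controls the restriction of $u$ to $Y_0$, which is exactly the ingredient Kohn uses to prove subelliptic $\tfrac12$-estimates. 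I would either invoke the $\tfrac12$-subelliptic estimate for the $\overline\partial$-Neumann problem on strongly pseudoconvex domains (Kohn), carried out here in the weighted metric, or—since the basic estimate already gives a \emph{compactness} estimate of the form $\|\overline\partial u\|^2+\|\overline\partial^\ast u\|^2\gtrsim \sum\|\overline L_\alpha u_\beta\|^2 + k\|u\|^2 + (\text{positive boundary term})$, i.e.\ control of all the \emph{barred} derivatives plus the boundary $L^2$ norm—bootstrap using the tangential Sobolev norms: differentiate the equation in directions tangent to $Y_0$, apply \eqref{2018-4-15-e3} to difference quotients $D_h^s u$, use that the commutators of tangential derivatives with $\square$ and with the boundary operator are lower order (the $O(\cdot)$ terms in Theorem \ref{basic estimate} are uniform in $\tau,k$ and absorb), and conclude $u\in H^s_{\mathrm{tan}}$ for all $s$; (4) recover the missing normal derivatives from the equation $\square u = \overline\partial^\ast_{\Lambda_{\tau,k}}\omega$ (noncharacteristic direction), i.e.\ solve algebraically for $L_0\overline L_0 u$-type terms in terms of tangential derivatives and $\omega$, gaining one normal derivative at a time; (5) iterate (3)–(4) to get $u\in C^\infty(\overline{U_\alpha\cap\mathcal M_\varepsilon})$, then patch with the partition of unity and combine with the interior regularity from (2) and the already-established smoothness away from $Y_1$ of $\omega$ to conclude $u\in C^\infty(\overline{\mathcal M_\varepsilon}\setminus Y_1)$.

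The main obstacle I expect is step (3)–(4): making the subelliptic bootstrap rigorous in the presence of the singular weight $\Lambda_{\tau,k}=-\tau\log(\varepsilon^2-|t|^2)+k\varphi$ and the non-compact end along $Y_1$. One must be careful that the tangential difference-quotient argument and the integration by parts generating the $O(\||u\||\cdot\|u\|)$ error terms remain valid uniformly: this is precisely why Lemma \ref{density lemma} is stated with the three-term convergence (controlling $u_m$, $\overline\partial u_m$, and $\overline\partial^\ast_{\Lambda_{\tau,k}}u_m$ simultaneously) and why the $O(A)$ notation was fixed to mean constants independent of $\tau,k$ — so that after committing to $\tau,k$ large enough for Theorem \ref{basic estimate}, all error terms are genuinely absorbable. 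A secondary technical point is the interaction of the $\overline\partial$-Neumann boundary condition $\sum_j L_j(r)u_j=0$ with the differentiation: tangential derivatives preserve it up to lower-order terms (as captured by the Morrey trick \eqref{Morrey trick} reused at each order), but the normal derivative does not, so the bootstrap must alternate tangential gains with equation-based normal recovery in the correct order, exactly as in Folland–Kohn. Finally, uniqueness of the smooth solution is immediate: any two solutions differ by a holomorphic $L^2$ function orthogonal to $\mathrm{Ker}(\overline\partial)$, hence zero, already from Corollary \ref{2018-07-08e1}.
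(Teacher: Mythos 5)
There is a genuine gap at the heart of your regularity argument: you run the Kohn--Folland bootstrap directly on the function $u$, asserting that it suffices to show ``$u$ is smooth up to $Y_0$ given $\square u\in C^\infty(\overline{\mathcal M_\varepsilon}\setminus Y_1)$ with $u$ and $\overline\partial u$ satisfying the $\overline\partial$-Neumann boundary conditions on $Y_0$.'' Two things go wrong. First, the canonical solution does not satisfy that boundary value problem: for a $(0,0)$-form the condition $u\in\mathrm{Dom}(\overline\partial^\ast_{\Lambda_{\tau,k}})$ is vacuous, and the second (free) Neumann condition would require the normal component of $\overline\partial u=\omega$ to vanish on $Y_0$, which fails for a general $\overline\partial$-closed smooth $\omega$; the actual characterization $u\perp\mathrm{Ker}(\overline\partial)$ is a global orthogonality condition, not a boundary condition, so there is no boundary value problem for $u$ to which your tangential/normal bootstrap applies. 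Second, the estimate you propose to iterate, namely \eqref{2018-4-15-e3} of Theorem \ref{basic estimate}, is an estimate for $(0,1)$-forms in $\mathrm{Dom}(\overline\partial^\ast_{\Lambda_{\tau,k}})$; there is no analogous subelliptic or basic estimate at degree $(0,0)$ (the harmonic space there is the infinite-dimensional weighted Bergman space), so steps (3)--(4) have no estimate to drive them at the form degree where you placed the problem.

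The paper's proof repairs exactly this by moving the regularity question up one degree: since $\overline\partial$ and $\overline\partial^\ast_{\Lambda_{\tau,k}}$ have closed range (by Theorem \ref{basic estimate}), $u\perp\mathrm{Ker}(\overline\partial)$ gives $u=\overline\partial^\ast_{\Lambda_{\tau,k}}\beta$ for some $\beta\in L^2_{(0,1)}(\mathcal M_\varepsilon,\Lambda_{\tau,k})$ with $\beta\perp\mathrm{Ker}(\overline\partial^\ast_{\Lambda_{\tau,k}})$, whence $\overline\partial\beta=0$ and $(\overline\partial\,\overline\partial^\ast_{\Lambda_{\tau,k}}+\overline\partial^\ast_{\Lambda_{\tau,k}}\overline\partial)\beta=\omega$. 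Now $\beta$ \emph{is} in the domain of the $\overline\partial$-Neumann problem on $(0,1)$-forms, where the strong pseudoconvexity of $Y_0$ and \eqref{2018-4-15-e3} yield Kohn's subelliptic regularity, and smoothness of $u=\overline\partial^\ast_{\Lambda_{\tau,k}}\beta$ up to $Y_0$ follows from that of $\beta$. The remaining work in the paper --- and the part you correctly flagged as the obstacle but did not supply --- is verifying that the Folland--Kohn elliptic regularization (\cite[Main Theorem 2.1.7]{FK}) survives the non-compact end along $Y_1$ and the singular weight: this is done with the cutoffs $\eta_v$ of Lemma \ref{density lemma}, uniform bounds $\|\eta_v\phi^{\delta_l}\|_s\leq C_{v,s}$, a Rellich-plus-diagonal extraction, and the identity $Q(\phi-\phi_0,\psi)=0$ identifying the limit. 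So your existence/uniqueness step and your identification of the technical difficulty are fine, but the reduction on which your entire regularity bootstrap rests is not valid as stated; you need the auxiliary $(0,1)$-form $\beta$ (equivalently, $u=\overline\partial^\ast_{\Lambda_{\tau,k}}N\omega$ with $N$ the Neumann operator on $(0,1)$-forms) before any of the subelliptic machinery can be brought to bear.
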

\begin{proof}
Let $\overline\partial: L^2(\mathcal M_\varepsilon, \Lambda_{\tau, k})\rightarrow L^2_{(0, 1)}(\mathcal M_\varepsilon, \Lambda_{\tau, k})$.  Let $\overline\partial^\ast_{\Lambda_{\tau, k}}$ be its Hilbert adjoint. There exist a unique $u\in L^2(\mathcal M_\varepsilon, \Lambda_{\tau, k})$ such that $\overline\partial u=\omega$ with $u\perp{\rm Ker}(\overline\partial)$. By Theorem \ref{basic estimate}, both $\overline\partial$ and $\overline\partial_{\Lambda_{\tau, k}}^\ast$ have closed range in $L^2_{(0, 1)}(\mathcal M_\varepsilon, \Lambda_{\tau, k})$ and $L^2(\mathcal M_\varepsilon, \Lambda_{\tau, k})$ respectively. Then $u\perp{\rm Ker}(\overline\partial)$ implies that $u\perp {\rm Rang}(\overline\partial_{\Lambda_{\tau, k}}^\ast)^{\perp}$ and thus $u\in{\rm Rang}(\overline\partial_{\Lambda_{\tau, k}}^\ast)$. There exist a $\beta\in L^2_{(0, 1)}(\mathcal M_\varepsilon, \Lambda_{\tau, k})$ with $\beta\perp{\rm Ker}(\overline\partial_{\Lambda_{\tau, k}}^\ast)$ such that $\overline\partial_{\Lambda_{\tau, k}}^\ast\beta=u$. Hence $\overline\partial\beta=0$ and thus $(\overline\partial\overline\partial_{\Lambda_{\tau, k}}^\ast+\overline\partial_{\Lambda_{\tau, k}}^\ast\overline\partial)\beta=\omega.$
By the localized version of Kohn's subelliptic estimate near $Y_0$ (see \cite[Chapter 2]{FK}) and \cite[Proposition 3.1.1, 3.1.11]{FK} we conclude that $\beta\in C^\infty(\overline{\mathcal M_\varepsilon}\setminus Y_1)$. The proof of this conclusion  is just a minor modification of the argument in the proof of the main theorem in \cite [Main Theorem 2.1.7]{FK}. For the convenience of the readers, we include the necessary modification here. We will follow the notation of \cite[Chapter 2]{FK}. We now define $Q(\phi, \phi)=\|\overline\partial\phi\|^2_{\Lambda_{\tau, k}}+\|\overline\partial_{\Lambda_{\tau, k}}^\ast\phi\|_{\Lambda_{\tau, k}}^2+\|\phi\|_{\Lambda_{\tau, k}}^2$ for $\phi\in\mathcal D^{(0, 1)}$, where $\mathcal D^{(0, 1)}$ is now a space of $(0, 1)$-forms $\phi\in C^\infty(\overline{\mathcal M_\varepsilon}\setminus Y_1)\cap{\rm Dom}(\overline\partial_{\Lambda_{\tau, k}}^\ast)$ with
compact support in the $t$-direction. $Q^\delta$ is defined in the same way as in \cite[pp32]{FK} with $\{\rho_j\}$ a partition of unity subordinate to a finite covering $\{U_{p_j}\}$ of $\overline{\mathcal M_\varepsilon}$.  Let $\alpha$ be any $\Lambda_{\tau, k}$-weighted $L^2$-integrable $(0, 1)$-form. We assume that $\alpha\in C^\infty(\overline{\mathcal M_\varepsilon}\setminus Y_1)$ and $\alpha=F(\phi)$ for some $\phi\in \tilde{\mathcal D}^{(0, 1)}$ where $\tilde{\mathcal D}^{(0, 1)}$  is the completion of $\mathcal D^{(0, 1)}$ under the $\Lambda_{\tau, k}$-weighted $Q$-norm.
For any given point $p\in \overline {Y_0}\setminus Y_1$ and a neighborhood $U_p$ of $p$ in $\overline{\mathcal M_{\varepsilon}}$ with $U_p$ not intersecting with $Y_1$,  to see that the \cite[Main Theorem 2.1.7 (2)]{FK} holds over $U_p$, we only need explain why $\xi \phi^{\delta_l}$ converges to $\xi \phi$ as $\delta_l\rightarrow 0^{+}$ for any $\xi\in C_0^\infty(U_p\cap\overline{\mathcal M_{\varepsilon}})$. Let $\{\eta_v\}$ be the sequence defined in the proof of Lemma \ref{density lemma}.  Substituting $\eta_v\phi^{\delta_l}$ to the  (\ref{2018-4-15-e3}), we have
\begin{equation}
\|\overline\partial (\eta_v\phi^{\delta_l})\|_{\Lambda_{\tau, k}}^2+\|\overline\partial_{\Lambda_{\tau, k}}^\ast (\eta_v\phi^{\delta_l})\|^2_{\Lambda_{\tau, k}} \gtrsim \|\eta_v\phi^{\delta_l}\|^2_{\Lambda_{\tau, k}}+C_v\int_{Y_0}|\eta_v\phi^{\delta_l}|^2 e^{-{\Lambda_{\tau, k}}}h_0e^{2\eta(t)}ds
\end{equation}
where $C_v$ is a constant which does not depend on $\phi^{\delta_l}$.
Then as in \cite[pp45]{FK}, there is a constant $C_{v, s}$ such that
\begin{equation}\label{2018-4-15-e2}
\|\eta_v \phi^{\delta_l}\|_{s}\leq C_{v, s}~\text{uniformly~as}~\delta_l\rightarrow0
\end{equation}
with $s\geq 1, v\gg1.$ Then by a diagonal selecting process and making use of Rellich lemma, we can assume that $\phi^{\delta_l}$ converges to $\phi_0$ in the Sobolev $\|\cdot\|_s$-norm for each $s\geq 1, v\gg1$ over any compact subset of $\overline{\mathcal M_\varepsilon}\setminus Y_1$. Thus, by Sobolev embedding theorem $\phi_0\in C^\infty(\overline{\mathcal M_\varepsilon}\setminus Y_1)$. Notice that $Q(\phi^{\delta_l}, \phi^{\delta_l})\leq Q^{\delta_l}(\phi^{\delta_l}, \phi^{\delta_l})=(\alpha, \phi^{\delta_l})$, where $\alpha=F(\phi)=F^{\delta_l}(\phi^{\delta_l})$. By a big-small constant argument, it follows that
\begin{equation}\label{2018-4-15-e1}
\|\overline\partial\phi^{\delta_l}\|_{\Lambda_{\tau, k}}\lesssim \|\alpha\|_{\Lambda_{\tau, k}},
\|\overline\partial^\ast_{\Lambda_{\tau, k}}\phi^{\delta_l}\|_{\Lambda_{\tau, k}}\lesssim \|\alpha\|_{\Lambda_{\tau, k}}, \|\phi^{\delta_l}\|_{\Lambda_{\tau, k}}\lesssim \|\alpha\|_{\Lambda_{\tau, k}}.
\end{equation}
An immediate consequence of  (\ref{2018-4-15-e1}) is that $\phi_0\in L^2_{(0, 1)}(\mathcal M_\varepsilon, \Lambda_{\tau, k})\cap{\rm Dom}(\overline\partial^\ast_{\Lambda_{\tau, k}})\cap{\rm Dom}(\overline\partial)$. Obviously,  $\eta_v\phi_0\in\mathcal{D}^{(0, 1)}$ for all $v\gg1$ and the limit of $\{\eta_v \phi_0\}$ with respect to the $\Lambda_{\tau, k}$-weighted $Q$-norm is $\phi_0$, thus $\phi_0\in\tilde{D}^{(0, 1)}$. Next, for any $\psi\in\mathcal {D}^{(0, 1)}$,
\begin{equation}
Q(\phi, \psi)=(\alpha, \psi)=Q^{\delta_l}(\phi^{\delta_l}, \psi)=Q(\phi^{\delta_l}, \psi)+O(\delta_lC_{v, 1}\|\psi\|_1)
\end{equation}
for some $v\gg1$  where $C_{v, 1}$ is the constant given in (\ref{2018-4-15-e2}). Letting $\delta_l\rightarrow0$, we have
\begin{equation}
Q(\phi-\phi_0, \psi)=0, \forall \psi\in\mathcal {D}^{(0, 1)}.
\end{equation}
Thus, $\phi=\phi_0$. After \cite[Main theorem 2.1.7]{FK} is modified to $\mathcal M_{\varepsilon}$, \cite [Propositions 3.1.1, 3.1.11]{FK} need no change at all for deriving the smoothness of our $\beta$ to a small neighbourhood of $p$ in $\overline{\mathcal M_\varepsilon}\setminus Y_1$.
\end{proof}
\section{Simultaneous embedding of CR manifolds}
Using what we have established in section \ref{dbar equation}, we can now give a proof of the Theorem \ref{main theorem}. The key step is to prove the following global extension theorem.
\begin{theorem}\label{global extension}
Let $f$ be a holomorphic function on $M_0$ which is smooth up to the boundary $X_0$. Then $f$ admits a  holomorphic extension $\hat f$ on $\mathcal M_\varepsilon$ which is smooth over $\overline{\mathcal M_\varepsilon}\setminus Y_1$.
\end{theorem}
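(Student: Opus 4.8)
The plan is to deduce Theorem~\ref{global extension} from the solvability with boundary regularity of the $\overline\partial$-equation on $\mathcal M_\varepsilon$ (Theorem~\ref{boundary regaularity}), after reducing the extension of $f$ to a $\overline\partial$-problem whose datum vanishes to infinite order along the fibre $M_0=\pi^{-1}(0)$. The scheme has three ingredients: an almost-holomorphic extension of $f$, a division by the holomorphic function $t\circ\pi$, and one application of Theorem~\ref{boundary regaularity}.

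\emph{Step 1 (almost-holomorphic extension).} I would first produce $\tilde f\in C^\infty(\overline{\mathcal M_\varepsilon}\setminus Y_1)$ with $\tilde f|_{M_0}=f$ and with $\omega:=\overline\partial\tilde f$ vanishing to infinite order along $M_0$. Locally, in a coordinate patch $(z,t)$ where $z$ is holomorphic on $\mathcal M_\varepsilon\cap U_\alpha$, the function $f|_{M_0\cap U_\alpha}$ extends holomorphically (take it independent of $t$); patching these local holomorphic extensions by a partition of unity gives a first approximation $\tilde f^{(1)}$ with $\tilde f^{(1)}|_{M_0}=f$ and, since the $\overline\partial$ of the partition functions sum to zero, with $\overline\partial\tilde f^{(1)}$ vanishing on $M_0$. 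One then improves the order inductively: the conormal bundle of $M_0$ in $\mathcal M_\varepsilon$ is trivial (a submersion fibre is cut out with multiplicity one by $t\circ\pi$), so the obstruction to extending $f$ holomorphically one further order along $M_0$ lies in the Dolbeault group $H^1$ of $M_0$ with coefficients in $\mathcal O$; this group vanishes because $M_0$ is a Stein manifold with strongly pseudoconvex boundary $X_0$, which moreover guarantees that the correcting solutions are smooth up to $X_0$ (Kohn's theory on $M_0$). Borel's lemma then realizes the resulting formal holomorphic expansion as a genuine $\tilde f$, so that $\overline\partial\tilde f$ vanishes to infinite order along $M_0$. Finally, multiplying $\tilde f$ by a cut-off $\chi(|t|)$ equal to $1$ near $t=0$ and vanishing for $|t|$ near $\varepsilon$ arranges, without changing $\tilde f|_{M_0}=f$ or the infinite-order vanishing of $\omega$ along $M_0$, that $\omega$ is supported in $\pi^{-1}(\{|t|\le b\})$ for some $b<\varepsilon$; in particular $\omega$ is a smooth, $\overline\partial$-closed $(0,1)$-form supported away from $Y_1$, hence $\Lambda_{\tau,k}$-weighted $L^2$-integrable and smooth up to $\overline{\mathcal M_\varepsilon}\setminus Y_1$.

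\emph{Step 2 (division and solution).} Since $t\circ\pi$ is holomorphic on $\mathcal M_\varepsilon$ and cuts out $M_0$ with multiplicity one, and $\omega$ vanishes to infinite order along $M_0$, the quotient $\omega_1:=\omega/(t\circ\pi)$ extends to an element of $C^\infty(\overline{\mathcal M_\varepsilon}\setminus Y_1)$; it is again $\overline\partial$-closed because $\overline\partial(t\circ\pi)=0$ on $\mathcal M_\varepsilon$, is supported away from $Y_1$, and is $\Lambda_{\tau,k}$-weighted $L^2$-integrable. Applying Theorem~\ref{boundary regaularity} to $\omega_1$ yields $v\in L^2(\mathcal M_\varepsilon,\Lambda_{\tau,k})\cap C^\infty(\overline{\mathcal M_\varepsilon}\setminus Y_1)$ with $\overline\partial v=\omega_1$. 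Setting $\hat f:=\tilde f-(t\circ\pi)\,v$, one gets $\hat f\in C^\infty(\overline{\mathcal M_\varepsilon}\setminus Y_1)$ with $\overline\partial\hat f=\omega-(t\circ\pi)\omega_1=0$ on $\mathcal M_\varepsilon$, so $\hat f$ is holomorphic, and $\hat f|_{M_0}=\tilde f|_{M_0}-(t\circ\pi)|_{M_0}\,v|_{M_0}=f$, which is the assertion.

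The crux is Step 1: constructing an extension $\tilde f$ with $\overline\partial\tilde f$ flat to infinite order along $M_0$. This is exactly where the Stein hypothesis on $M_0$ (together with strong pseudoconvexity of $X_0$, for boundary regularity) enters, through the vanishing of the successive $\overline\partial_{M_0}$-cohomological obstructions; one also has to check carefully that these obstructions are genuine $\overline\partial_{M_0}$-closed scalar data on $M_0$ (using triviality of the conormal bundle) and that Borel summation does not reintroduce a nonzero low-order part. Granting Step 1, Step 2 is routine, the only point being that dividing a form flat along $M_0$ by the holomorphic function $t\circ\pi$ preserves smoothness up to $\overline{\mathcal M_\varepsilon}\setminus Y_1$ and the $\Lambda_{\tau,k}$-weighted $L^2$ bound (the latter because $\omega$, hence $\omega_1$, is supported away from $Y_1$, where the weight $e^{-\Lambda_{\tau,k}}$ and the metric $ds^2$ are under control).
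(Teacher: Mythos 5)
Your Step 2 is fine and coincides with the paper's final step (divide the $\overline\partial$ of a cut-off extension by $t\circ\pi$, solve with Theorem~\ref{boundary regaularity}, subtract $t$ times the solution). The problem is the very first move of Step 1. Near a boundary point $p\in X_0$ the prescription ``extend $f$ by making it independent of $t$'' does not produce a holomorphic extension on a neighborhood of $p$ in $\overline{\mathcal M_\varepsilon}$: for $(z,t)\in\mathcal M_\varepsilon\cap U_\alpha$ with $t\neq 0$ the point $(z,0)$ need not lie in $\overline{M_0}$ (the fibers $M_t$ move with $t$), and $f$ admits in general no local holomorphic continuation past its strongly pseudoconvex boundary $X_0$, so the value $f(z,0)$ is simply not available. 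You can repair this by using a Whitney/Seeley extension of $f$ past $X_0$, but then your local pieces are only almost holomorphic, the differences $\tilde f_\alpha-\tilde f_\beta$ are no longer holomorphic, and the whole weight of the proof falls on the infinite-order flattening you sketch. That inductive scheme is the second soft spot: as stated it hides real work --- one must show that the order-$j$ obstruction, after removing its $d\bar t$-component and the $\bar t$-dependent terms (which are killed by corrections of the form $t^i\bar t^j u$, not by solving anything on $M_0$), is a genuinely $\overline\partial_{M_0}$-closed $(0,1)$-form smooth up to $X_0$; one needs boundary-regular solvability of $\overline\partial$ on the fixed fiber $M_0$ (Kohn, using that $X_0$ is strongly pseudoconvex and $\overline{M_0}$ has a Stein neighborhood); and the Borel summation must be done with Seeley extensions and uniform control up to $X_0$ so that the resulting $\omega$ is flat along all of $\overline{M_0}$, including boundary points, which is what makes $\omega/(t\circ\pi)$ smooth up to $Y_0$. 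None of this is impossible, but none of it is in your text, and the first-approximation claim as written is false.

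For comparison, the paper short-circuits exactly this difficulty: by a lemma of Bell it covers $\mathcal M_\varepsilon$ by strongly pseudoconvex pieces $U_\alpha$ whose boundaries share open portions with $Y_0$, and then invokes Amar's extension theorem to extend $f$ from $U_\alpha\cap M_0$ to a function holomorphic on all of $U_\alpha$ and smooth up to $\partial U_\alpha$. With honestly holomorphic local extensions, the differences $\tilde f_\alpha-\tilde f_\beta$ are holomorphic and vanish on $M_0$, hence are divisible by $t$ with quotients smooth up to the boundary, so only first-order vanishing along $M_0$ is needed: $\omega=\frac{1}{t}\overline\partial(\chi(t)\tilde f)$ is already smooth up to $\overline{\mathcal M_\varepsilon}\setminus Y_1$ and compactly supported in the $t$-direction, and Theorem~\ref{boundary regaularity} finishes the proof. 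So the essential content you are missing is a boundary extension theorem of Amar's type (or an equivalent substitute); if you keep your route instead, you must supply the full order-by-order correction and Borel argument described above, which is considerably heavier than what the paper actually uses.
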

\begin{proof}
By a Lemma of Bell  \cite[Section4]{Be86}, we can find a finite open covering $\{U_\alpha\}$ of ${\mathcal M_\varepsilon}$ such that each $U_\alpha\subset \mathcal M$ is a  strongly pseudoconvex manifold  with connected smooth boundary and the intersection $\partial U_\alpha\cap\partial \mathcal M$ (if not empty) contains an open subset of the strongly pseudoconvex boundary of $\mathcal M$.  Suppose $\varepsilon$ is sufficiently small and we can assume that $U_\alpha\cap M_0\neq\emptyset$ for all $\alpha$. We denote by $f_\alpha$ the restriction of $f$ on $U_\alpha\cap M_0$ for each $\alpha$. Then each $f_\alpha$ is a holomorphic function which is smooth up to the boundary of $U_\alpha\cap M_0$. By \cite[Theorem 1]{A84}, $f_\alpha$ can be holomorphically extended to $U_\alpha$ and if we denote the extension by  $\tilde f_\alpha$ then  $\tilde f_\alpha$ is smooth up to the boundary of $U_\alpha$. Choose a partition of unity $\{\chi_\alpha\}$ subordinate to the covering $\{U_\alpha\}$. Put $\tilde f=\sum_\alpha\chi_\alpha\tilde f_\alpha$.  Choose a cut-off function $\chi(t)\in C_0^\infty(\Delta_\varepsilon)$ satisfying
$\chi\equiv1, \hbox{when} |t|\leq\frac{\varepsilon}{2}$. Set $\omega=\frac{1}{t}\overline\partial(\chi(t)\tilde f)$. Then by the same argument as in \cite[pp364]{HLY06} we have that $\omega\in\Omega^{0, 1}(\overline{\mathcal M_\varepsilon})$ and $\omega$ has compact support along $t$-direction. By Theorem \ref{boundary regaularity}, there exists $u\in C^\infty(\overline {\mathcal M_\varepsilon}\setminus Y_1)$ such that
$\overline\partial u=\omega.$
Thus, $\overline\partial(\chi(t)\tilde f-tu)=0$.  Write $\hat f=\chi(t)\tilde f-tu$. Then $\hat f$ is a holomorphic function on $\mathcal M_\varepsilon$ and  $\hat f\in C^\infty(\overline{\mathcal M_\varepsilon}\setminus Y_1)$, $\hat f|_{X_0}=f$.
\end{proof}
\subsection{Proof of Theorem \ref{main theorem}} \begin{proof}Let $F_0=(f_1, \cdots, f_m): X_0\rightarrow \mathbb C^m$ be a smooth CR embedding. Then by Bochner extension each $f_j$ can be holomorphically extended to $M_0$ which is still denoted by $f_j$ and $f_j$ is smooth up to the boundary $X_0$. By Theorem \ref{global extension}, each $f_j$ admits a holomorphic extension $\hat f_j$ to $\mathcal M_\varepsilon$. Moreover, each $\hat f_j\in C^\infty(\overline {\mathcal M_\varepsilon}\setminus Y_1)$ for $1\leq j\leq m$. Set $\hat G=(\hat f_1, \cdots, \hat f_m): \mathcal M_\varepsilon\rightarrow \mathbb C^m$. Then $\hat G$ is  a holomorphic map and $\hat G\in C^\infty(\overline {\mathcal M_\varepsilon}\setminus Y_1)$ with $\hat G|_{X_0}=F_0$. Set $G=(\hat G, \pi): \mathcal X_{\varepsilon}\rightarrow \mathbb C^{m+1}$ with $G(p)=(\hat G(p), \pi(p))~\forall~ p\in \mathcal X_{\varepsilon}$. Then $G$ is a CR embedding when $\varepsilon$ is sufficiently small and $G|_{X_t}: X_t\rightarrow \mathbb C^m\times\{t\}$ is a CR embedding. $G|_{X_0}=(F_0, 0)$.
\end{proof}
The arguments in this work give some partial result of Huang's problem \cite{H08} under the condition each fiber bounds a stein space without singularities. It does not work on  the cases when each fiber bounds a strongly pseudoconvex complex manifold which have compact analytic varieties in the interior. Motivated by Huang-Luk-Yau's work \cite{HLY06}, we state the following open problem:
\begin{problem}
Let $\{X_t\}_{t\in\Delta}$ be a CR family of $3$-dimensional strongly pseudoconvex CR manifolds. Suppose the total space $X=\cup_{t\in\Delta}X_t$ bounds a holomorphic family of complex manifolds $\{M_t\}_{t\in\Delta}$. Suppose that
$$\dim H_{(2)}^1(M_t)\equiv\hbox{constant}>0$$
where $H^{1}_{(2)}(M_t)=\frac{{\rm Ker}\overline\partial: L^2_{(0, 1)}(M_t)\rightarrow L^2_{(0, 2)}(M_t)}{{\rm Im}\overline\partial: L^2(M_t)\rightarrow L^2_{(0, 1)}(M_t)}.$
Assume that $X_0$ can be CR embedded into some $\mathbb C^N$. Can the nearby $X_t$ be CR embedded into the same $\mathbb C^N$ with the embedding maps CR depending on the parameters ?
\end{problem}
\begin{center}
{\bf Acknowledgement}
\end{center}

The  authors thank Professor Xiaojun Huang for introducing this problem to them and many helpful discussions during the preparation of this work. The first named author also thank Professor Huang for the constant encouragement and support in mathematics during these years.


\begin{thebibliography}{XXX}

\bibitem[A84]{A84}Amar, E.,
{\it Cohomologie complexe et applications},
J. London Math. Soc. (2) 29 (1984), no. 1, 127--140.

\bibitem [AS70]{AS70}A. Andreotti and Y. -T. Siu, {\it Projective embeddings of pseudoconcave spaces}, Ann. Sc. Norm. Super. Pisa 24 (1970), 231--278.
 \bibitem[Be86]{Be86}S. Bell,  {\it Differentiability of the Bergman kernel and pseudolocal estimates}, Math. Z. 192 (1986), no. 3, 467--472.
\bibitem[BdM74]{BdM1:74b}
L. Boutet de Monvel, {\it Int\'egration des \'equations de
  {C}auchy-{R}iemann induites formelles}, S\'eminaire Goulaouic-Lions-Schwartz
  1974--1975; \'Equations aux deriv\'ees partielles lin\'eaires et non
  lin\'eaires, Centre Math., \'Ecole Polytech., Paris, 1975, Exp. no. 9,
  pp.~13.
\bibitem[Bur79]{Bur:79}
D. M. Burns, \emph{Global behavior of some tangential {C}auchy-{R}iemann
  equations}, Partial differential equations and geometry (Proc. Conf., Park
  City, Utah, 1977), Lecture Notes in Pure and Appl. Math., vol.~48, Dekker,
  New York, 1979, pp.~51--56.
\bibitem[BE90]{BE90}
D. M. Burns and C.-L.~Epstein, \emph{Embeddability of {Three}-{Dimensional} {C}{R}-{Manifolds}},
J. Amer. Math. Soc.\ \textbf{4} (1990), 809--840.
\bibitem[BJ97]{BJ97} R. Buchweitz and J. Millson, {\it CR-geometry and deformations of isolated singularities,} Mem. Amer. Math. Soc. \textbf{125} (597) (1997).


 \bibitem[CL92]{CL92} D. Catlin and L. Lempert,
 {\it A note on the instability of embeddings of Cauchy-Riemann manifolds},
 J. Geom. Anal. \textbf{2} (2) (1992), 99-104.
\bibitem[CS01] {CS01} S. C. Chen and M. C. Shaw, {\it Partial differential equations in several complex variables},
AMS/IP Studies in Advanced Mathematics, 19, American Mathematical Society, Providence, RI; International Press, Boston, MA, 2001.

\bibitem[Ep92]{Ep92} C. L. Epstein, {\it CR-structures on three dimensional circle bundles},
Invent. Math. \textbf{109} (1992), 351--403.
\bibitem[EH00]{EH00} C. L. Epstein and G. M. Henkin,
{\it Stability of embeddings for pseudoconcave surfaces and their boundaries},
Acta Math. \textbf{185} (2000), no. 2, 161--237.
\bibitem[FK72] {FK}  G. B. Folland and J. J. Kohn,
{\it The Neumann problem for the Cauchy-Riemann complex,} Annals of
Math. Studies 75. Princeton Universities Press, Princeton, NJ, 1972.

 \bibitem[Gr62]{G62} H. Grauert, {\it Uber Modifikationen und exzeptionelle analytische Mengen}, Math. Ann., 146 1962, 331--368.



\bibitem [Gr94]{G94} H. Grauert, {\it Theory of q-convexity
and q-concavity}, Several Complex Variables VII, H. Grauert, Th. Peternell,
R. Remmert, eds. Encyclopedia of Mathematical Sciences, vol. 74. Springer, 1994.
\bibitem[HN05]{HN05} C. D. Hill and M. Nacinovich, {\it Stein fillability and the realization of contact manifolds}, Proc. Amer. Math. Soc. 133 (2005), no. 6, 1843--1850.




\bibitem[HHL15]{HHL15}
H.~Herrmann, C-Y.~Hsiao and X.~Li,
\emph{Szeg\H{o} kernel expansion and equivariant  embedding of CR manifolds with circle action},  Ann.  Global Anal. Geom. \textbf{52} (2017), no. 3, 313--340.





\bibitem[HM14]{HM14}
C-Y.~Hsiao and G.~Marinescu,
\emph{On the singularities of the Szeg\H{o} projections on lower energy forms},
 J. Differential Geom. 107 (2017), no. 1, 83--155.


\bibitem[HLY06]{HLY06} X. Huang, S. Luk and S. S. T. Yau,
{ \it On a CR Family of Compact strictly Pseudoconvex CR Manifolds,}
J. Differential Geom. \textbf{72} (2006), 353-379.
\bibitem[Hu08]{H08} X. Huang, {\it Isolated complex singularities and their CR links},  Sci. China Ser. A {\textbf 49}, (11)(2006), 1441-1450.
\bibitem[JT82]{JT82}
H. Jacobowitz and F. Treves,
\textit{Non-realizable CR structures},
Invent. Math. \textbf{66} (1982), 231-249.

\bibitem[Ku82]{Ku82}
M. Kuranishi,
\textit{Strongly pseudoconvex CR structures over small balls, I, II},
Ann. of Math. (2) \textbf{115} (1982), 451--500; \textbf{116} (1982), 1--64.
\bibitem[La15]{C15} C. Laurent-Thi\'ebaut,
{\it Stability of the embeddability under perturbations of the CR
structure for compact CR manifolds},
Trans. Amer. Math. Soc. \textbf{367} (2015), 943--958.

\bibitem[Le92]{Le92} L. Lempert, {\it On three dimensional Cauchy-Riemann manifolds},
J. Amer. Math. Soc. \textbf{5} (1992), 923--969.

\bibitem[Le94]{Le94} L. Lempert, {\it Embeddings of three dimensional Cauchy-Riemann manifolds},
Math. Ann. \textbf{300} (1994), 1--15.
 \bibitem[M16]{M16} G.  Marinescu, {\it Existence of holomorphic sections and perturbation of positive line bundles over $q$-concave manifolds}, Bull. Inst. Math. Acad. Sin. (N.S.) 11 (2016), no. 3, 579--602.





\bibitem[Ni74]{Ni74}
L. Nirenberg,
\textit{On a problem of Hans Lewy},
Uspekhi Mat. Nauk \textbf{29} (1974), 241--251.

\bibitem[Ro65]{Ro65} H. Rossi, {\it Attaching analytic spaces to an analytic
space along a pseudoconvex boundary,}
Proc. Conf. on Complex Manifolds, pp. 242-256. New York: Springer 1965.

\bibitem[Ta75]{T75}
N. Tanaka,
{\it A differential geometric study on strictly pseudoconvex manifolds},
In: Lecture Notes in Math. Tokyo: Kyoto University, Kinokuniya Bookstore Co. (1975).

\bibitem[W04]{W04}W. Wang, {\it Embeddability of some three-dimensional weakly pseudoconvex CR structures}, Canad. Math. Bull. {\bf 47} (1) (2004), 133--143.






\end{thebibliography}
\end{document}